\newcommand\setItemnumber[1]{\setcounter{enumi}{\numexpr#1-1\relax}} 
\newtheorem{theorem}{Theorem}[section]
\newtheorem{lemma}[theorem]{Lemma}
\newtheorem{proposition}[theorem]{Proposition}
\newtheorem{claim}[theorem]{Claim}
\theoremstyle{remark}
\newcommand{\RR}{\mathbb{R}}
\newcommand{\id}{\text{id}}
\begin{document}

\title[The number of locally invariant orderings of a group]{The number of locally invariant orderings of a group}


\author[Idrissa Ba]{Idrissa Ba} \email{ba162006@yahoo.fr}
\author[Adam Clay]{Adam Clay} \email{Adam.Clay@umanitoba.ca}
\author[Ian Thompson]{Ian Thompson} \email{thompsoi@myumanitoba.ca}
\address{Department of Mathematics\\
University of Manitoba \\
Winnipeg \\
MB Canada R3T 2N2} 

\subjclass[2010]{Primary: 06F15, 20F60.}
  \keywords{Locally invariant partial orderings, locally invariant total orderings, left-orderings}
  \thanks{Idrissa Ba was supported by a University of Manitoba postdoctoral fellowship.}
  \thanks{Adam Clay was supported by NSERC grant RGPIN-2020-05343.}
  \thanks{Ian Thompson was supported by a University of Manitoba USRA}

\begin{abstract}
We show that if a nontrivial group admits a locally invariant ordering, then it admits uncountably many locally invariant orderings.  For the case of a left-orderable group, we provide an explicit construction of uncountable families of locally invariant orderings; for a general group we provide an existence theorem that applies compactness to yield uncountably many locally invariant orderings.  Along the way, we define and investigate the space of locally invariant orderings of a group, the natural group actions on this space, and their relationship to the space of left-orderings.
\end{abstract}

\maketitle

\section{Introduction}

A classical question in the theory of ordered groups is to determine the number of possible order structures of a particular kind that may be supported by a given group. 

For example, in the case of bi-orderings it is known that there are groups admitting only finitely many bi-orderings (though the groups admitting finitely many bi-orderings are not classified, see \cite{BR:77} and \cite{KM:96}), there are also groups admitting countably infinitely many bi-orderings \cite{But:71}, and groups admitting uncountably many bi-orderings (for instance, $\mathbb{Z}^2$).  

On the other hand, Conradian left-orderings, left-orderings and circular orderings of groups each exhibit a slightly different behaviour than bi-orderings.  For each of these order structures, a group $G$ either admits finitely many such orderings or uncountably many (See \cite{Linnell:09}, \cite{Riv:10} and \cite{CMR:18}).  Moreover, the groups admitting finitely many such orderings are completely classified in each case.  In the case of left-orderings and Conradian orderings it is further known that if a group admits finitely many such orderings, then the number of orderings is $2^n$ for some $n  > 0 $.  In the case of circular orderings, however, it remains an open question to determine precisely how many circular orderings exist on a group when it admits only finitely many circular orderings \cite[Question 1.1]{CMR:18}.

This paper expands upon these results by determining the number of possible locally invariant orderings of a given group.  Owing to a result of Linnell and Morris, every group which admits a locally invariant partial ordering also admits a local invariant total ordering (\cite{LiWM:14}), and so our analysis naturally requires a consideration of both cases.  In contrast with the results above, we prove that no group admits finitely many locally invariant orderings, either partial or total.  More precisely, we show:

\begin{theorem}\label{maintheorem}
If a nontrivial group $G$ admits a locally invariant ordering (partial or total), then it admits uncountably many locally invariant orderings (partial and total).
\end{theorem}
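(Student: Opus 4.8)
The plan is to funnel both halves of the statement into a single goal: \emph{produce uncountably many locally invariant total orderings of $G$}. By the theorem of Linnell and Morris \cite{LiWM:14}, a group admits a locally invariant partial ordering if and only if it admits a locally invariant total ordering, so the hypothesis already hands us one locally invariant total ordering of $G$; and since every total ordering is in particular a partial ordering, an uncountable family of locally invariant total orderings is at the same time an uncountable family of locally invariant partial orderings. I would also record at the outset that $G$ is torsion-free: if $g$ were a torsion element, the restriction of a locally invariant total ordering to the finite subgroup $\langle g\rangle$ would have a greatest element $x$, and then neither $xg$ nor $xg^{-1}$ (both in $\langle g\rangle$) exceeds $x$, contradicting local invariance. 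Hence $G$ contains a copy of $\ZZ$, say $\langle t\rangle$, which the constructions below will exploit.

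\emph{Left-orderable case (explicit construction).} Every left-ordering of $G$ is already locally invariant, since for $g\neq e$ either $g$ or $g^{-1}$ lies in the positive cone, so $hg>h$ or $hg^{-1}>h$ for every $h$; but this alone need not give uncountably many, as $G$ may have only finitely many left-orderings. The extra ingredient is a \emph{localized reversal} of a left-ordering $<$ with positive cone $P$: declare $e$ least, keep $<$ on $P$, and place all of $P^{-1}$ above $P$ ordered by the reverse of $<$. A short case analysis on the location of $h$ (in $\{e\}$, in $P$, or in $P^{-1}$) shows the resulting total ordering is locally invariant. To get an uncountable family I would carry out this reversal along the powers of $t$, interleaving the ``upward'' and ``downward'' pieces scale by scale according to an arbitrary infinite co-infinite set $S\subseteq\ZZ$; distinct $S$ yield distinct orderings, so there are $2^{\aleph_0}$ of them. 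The guiding model is $G=\ZZ$ itself, where these orderings are exactly ``$0$ least, then the rays $\ZZ_{>0}$ and (reversed) $\ZZ_{<0}$ shuffled together arbitrarily'', and one checks directly that a total order of this shape never makes an element the greatest of a triple symmetric about it, which is precisely local invariance on $\ZZ$.

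\emph{General case (compactness).} Let $X\subseteq\{0,1\}^{G\times G}$ be the set of (graphs of) locally invariant total orderings of $G$, with the product topology. Antisymmetry, transitivity, totality, and local invariance are each intersections of conditions involving only finitely many coordinates, so $X$ is closed, hence compact Hausdorff, and nonempty by the first paragraph. I would then prove the key lemma that $X$ has \emph{no isolated points}: given a locally invariant total ordering $<$ and a finite window $F\subseteq G$, one modifies $<$ on a cofinal collection of elements ``far from $F$'' along the powers of $t$ — in the spirit of the localized reversal above, but now starting only from a locally invariant ordering — to obtain a different locally invariant total ordering still agreeing with $<$ on $F$. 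Granting this, a standard Cantor-scheme argument in the nonempty perfect compact Hausdorff space $X$ produces an injection of $2^{\aleph_0}$ into $X$, completing the proof.

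The heart of the matter, and the expected main obstacle, is exactly that no-isolated-points lemma: performing the localized-reversal/interleaving modification with only a locally invariant ordering in hand, since without a positive cone the verification that the perturbed relation remains transitive, total, and locally invariant is a delicate case-by-case argument. The remaining steps — the Linnell--Morris reduction, the passage from total to partial, compactness of $X$, and the implication from ``perfect'' to cardinality $\ge 2^{\aleph_0}$ — are routine.
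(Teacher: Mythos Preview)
Your overall architecture---reduce to total orderings via Linnell--Morris, realize $\mathrm{LIO}_t(G)$ as a closed subspace of $\{0,1\}^{G\times G}$, and deduce uncountability from perfectness---is reasonable in outline, but there is a genuine gap at exactly the point you flag: you do not prove the no-isolated-points lemma, and your suggested mechanism for it (``localized reversal along powers of $t$'') does not obviously go through once left-orderability is dropped. In the left-orderable case your reversal idea is correct for $G=\ZZ$ (any interleaving of the two rays, each kept in its own order, is locally invariant), but ``carrying this out along powers of $t$'' inside a general left-orderable $G$ is not specified precisely enough to check, and in the general LIO case there is no positive cone at all, so the very notion of ``reversing'' is undefined. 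Without that lemma the Cantor-scheme step is inert, and the proposal is a plan rather than a proof.

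The paper sidesteps this difficulty by a different and more direct idea. Instead of perturbing a given ordering near a basic open set, it \emph{parametrizes} locally invariant orderings by subsets $R\subset G$ with $\id\notin R$ and $|R\cap\{g,g^{-1}\}|\le 1$ for all $g$, and shows that for \emph{every} such $R$ there exists a locally invariant ordering $\prec$ satisfying $g\prec g^{-1}\iff g\in R$ (total precisely when $|R\cap\{g,g^{-1}\}|=1$ for every $g\ne\id$). The existence proof uses the equivalence of LIO with \emph{diffuseness} \cite{LiWM:14}: every finite symmetric $S\subset G$ has an extreme point $a$ (hence also $a^{-1}$), so one builds the required ordering on $S$ by induction on $|S|$, ordering $S\setminus\{a,a^{-1}\}$ first and then putting $a,a^{-1}$ on top with their relative position dictated by $R$; a standard compactness argument then globalizes. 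Since distinct $R$'s visibly yield distinct orderings and there are uncountably many $R$'s of either kind once $G$ is infinite, both the partial and total conclusions follow at once. The key ingredient you are missing is this use of extreme points from diffuseness: it replaces your delicate ``perturb a given ordering'' step with a clean finite induction that never needs to start from a pre-existing ordering of all of $G$.
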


We also investigate the structure of the space of locally invariant orderings, denoted $\mathrm{LIO}_p(G)$ or $\mathrm{LIO}_t(G)$ depending on whether we are speaking of partial or total orderings, in the case that $G$ is left-orderable.  In particular, we find that left-orderable groups for which $\mathrm{LO}(G)$ is uncountable always admit uncountably many locally invariant orderings owing to the existence of various continuous mappings $\mathrm{LO}(G) \rightarrow \mathrm{LIO}_p(G)$; while when $\mathrm{LO}(G)$ is finite we can use the well-understood algebraic structure of $G$ to construct uncountable many locally invariant orderings.  See Theorems \ref{lo theorem} and \ref{lo theorem2}.

\section{Acknowledgements}
We would like to thank Dave Morris for suggesting a new proof of Theorem \ref{maintheorem}, which both strengthened our main result and shortened the proof.

\section{Background}
\label{background}

A group $G$ is \emph{left-orderable} if there exists a strict total ordering $<$ of its elements such that $g<h$ implies $fg<fh$ for all $f, g, h \in G$, we call such an ordering \emph{left-ordering} of $G$.
Equivalently, $G$ is left-orderable if and only if there exists $P \subset G$ such that $P \cdot P \subset P$, $P \cap P^{-1} = \emptyset$, and $G \setminus \{ id \} = P \cup P^{-1}$. The left-orderings of $G$ are in one-to-one correspondence with such semigroups---the natural choice, $P = \{ g\in G\mid g>1\}$, is called the {\it positive cone} of the ordering $<$.  On the other hand, given such a subset $P \subset G$, we can define $g<h$ if and only if $g^{-1}h \in P$ for all $g,h \in G$.   This establishes a bijective correspondence between positive cones $P \subset G$ and left-orderings of $G$.

Passing to positive cones allows one to create the \emph{space of left-orderings} of a group $G$, denoted $\mathrm{LO}(G)$, as follows.  Topologize the two-element set $\{0,1\}$ using the discrete topology, and give the power set $\wp(G) = \{0,1\}^G$ the product topology. This topology makes $\wp(G)$ a compact space by Tychonoff's theorem. Subbasic open sets are 
\[ U_g = \{ S \subset G \mid g \in S\} \mbox{ and } U_g^C = \{ S \subset G \mid g \notin S \}.
\]
Now let $\mathrm{LO}(G) \subset \wp(G)$ denote the set of all subsets $P \subset G$ satisfying $P \cdot P \subset P$, $P \cap P^{-1} = \emptyset$, and $G \setminus \{ id \} = P \cup P^{-1}$, in other words, the set of all positive cones.   One can check that with the subspace topology, $\mathrm{LO}(G)$ is a compact Hausdorff space, and it comes equipped with a natural $G$-action by homeomorphisms given by $g \cdot P = gPg^{-1}$ (See \cite{ClRo:16}, Chapter 10).

We can generalize all of this to the setting of locally invariant orderings as follows.  A \emph{locally invariant partial ordering} $ \prec $ of $G$ is a strict partial ordering (i.e. an irreflexive, transitive relation) of the elements of $G$ such that for any $g,h\in G$ with $h\neq 1$, either $g \prec hg$ or $g \prec h^{-1}g$.  A group admitting such an ordering will be called \emph{locally invariant orderable}, we will sometimes write LIO for short.  It is easy to see that every left-ordering is a locally invariant ordering, thus every left-orderable group is LIO. However, the converse is false and an example of a LIO group that is not left-orderable appears in \cite{KRD:16}.

We note here that our definition of a locally invariant partial ordering is slightly different than that appearing in the literature:  A locally invariant partial ordering is usually defined to be a partial ordering $<$ such that if $g,h\in G$ with $h\neq 1$ then $g<gh$ or $g<gh^{-1}$.  However, there is a correspondence between partial orderings $\prec$ satisfying the condition of the previous paragraph and partial orderings $<$ satisfying the usual definition in the literature.  For if $ \prec $ is a partial ordering of $G$ satisfying the condition $g,h\in G$ with $h\neq 1$ implies $g \prec hg$ or $g \prec h^{-1}g$, then
\[ g<h \mbox{ if and only if } g^{-1} \prec h^{-1}
\]
defines a partial ordering $<$ such that if $g,h\in G$ with $h\neq 1$ then $g<gh$ or $g<gh^{-1}$.  This is clearly a bijective correspondence.  

Just as positive cones characterize left-orderings, one might ask whether or not there are subsets of $G$ that can capture the notion of locally invariant partial orderings of $G$.  This question is answered in \cite{DNR:14} as follows.  Consider the product $\wp(X)^X$, which has elements denoted $(P_x)_{x\in X}$ where each $x\in X$ corresponds to a subset $P_x\subset X$.  Define an {\it equivariant field of cones} for a group $G$ to be a collection of subsets $(P_f)_{f\in G}\in\wp(G)^G$ such that:
\begin{enumerate}
\item $P_f\cup P_f^{-1} = G\setminus\{ id\}$ for all $f \in G$, and
\item for all $f, g, h \in G$, if $g\in P_f$ and $h\in P_{gf}$, then $hg\in P_f$ (known as the \emph{equivariant} condition).
\end{enumerate}

Equivariant fields of cones are in bijective correspondence with the collection of locally invariant partial orderings. While the details of this correspondence appear already in \cite{DNR:14}, we have changed some of their conventions and so we verify the details here, which mirrors their exposition.

Given a locally invariant partial ordering $\prec$ for $G$ and $f\in G$, define $P_f = \{ g\in G\mid f \prec gf\}$.   Then for each $f \in G$, if $g \in G \setminus \{ id \}$ either $f\prec gf$ or $f\prec g^{-1}f$. So either $g \in P_f$ or $g^{-1} \in P_f$, meaning (1) holds.  Next, suppose that $g \in P_f$ and $h \in P_{gf}$.  This means $f\prec gf$ and $gf\prec hgf$, so that $f\prec hgf$ by transitivity, and $hg \in P_f$.  This shows that (2) holds.  

Conversely, given an equivariant field of cones $(P_f)_{f\in G}$, define a locally invariant partial order $\prec $ on $G$ by $f \prec g$ if and only if $gf^{-1}\in P_f$.  That $\prec$ is irreflexive follows immediately from property (1) of an equivariant field of cones.  For transitivity, suppose $f \prec g$ and $g \prec h$.  Then $gf^{-1} \in P_f$ and $hg^{-1} \in P_g$. By property (2), it follows that $(hg^{-1})(gf^{-1}) \in P_f$.  But $hf^{-1} \in P_f$ precisely means $f \prec h$, so we have transitivity.  Now if $g, h \in G$ and $h \neq id$ then either $h \in P_g$ or $h^{-1} \in P_g$. In the first case, $(hg)g^{-1} \in P_g$ so that $g \prec hg$, in the second case, $g \prec h^{-1}g$. 

One may wonder whether or not anything is gained by asking that a group $G$ admit a locally invariant \emph{total} ordering.  That is, define a \emph{locally invariant total ordering} of $G$ to be a locally invariant partial ordering $\prec$ of $G$ that additionally satisfies $\forall g, h \in G$ with $g \neq h$, either $g \prec h$ or $h \prec g$.  Correspondingly, a group $G$ admits a locally invariant total ordering if and only if there is an equivariant field of cones $(P_f)_{f \in G}$ that additionally satisfies:\begin{enumerate}
\setItemnumber{3}
\item for every $g, h \in G$ with $g \neq h$, either $gh^{-1} \in P_h$ or $hg^{-1} \in P_g$.
\end{enumerate}We call such a collection an \emph{equivariant field of total cones}.   Adding this condition describes a genuinely new collection of relations on the group $G$.  However, this does not yield a family of groups that are any different from the family of LIO groups already described:

\begin{theorem}[\cite{LiWM:14}]\label{liopt}
Let $G$ be a group.  Then $G$ admits a locally invariant partial ordering if and only if $G$ admits a locally invariant total ordering.
\end{theorem}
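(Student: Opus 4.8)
The plan is to prove the nontrivial direction: assume $G$ admits a locally invariant partial ordering and produce a locally invariant total one. The natural approach is a Zorn's lemma argument on the set of equivariant fields of cones, partially ordered by refinement, and then to show that a maximal element automatically satisfies the totality condition (3). First I would fix a locally invariant partial ordering $\prec$ on $G$, pass to its equivariant field of cones $(P_f)_{f\in G}$ as in the correspondence recalled above, and consider the poset $\mathcal{P}$ of all equivariant fields of cones $(Q_f)_{f\in G}$ with $P_f\subseteq Q_f$ for all $f\in G$, ordered by $(Q_f)\le (Q'_f)$ iff $Q_f\subseteq Q'_f$ for all $f$. A chain in $\mathcal{P}$ has an upper bound given by the pointwise union $Q_f = \bigcup_\alpha Q^\alpha_f$: conditions (1) and (2) are each witnessed by finitely many elements of $G$, so they are preserved under directed unions. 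Hence Zorn's lemma furnishes a maximal equivariant field of cones $(Q_f)_{f\in G}$ extending $(P_f)_{f\in G}$.

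The heart of the argument is then to show that this maximal $(Q_f)$ satisfies condition (3), i.e. that the associated partial order is total. Suppose not: there exist $g\ne h$ with $gh^{-1}\notin Q_h$ and $hg^{-1}\notin Q_g$. Setting $k = gh^{-1}\ne 1$, condition (1) applied to $Q_h$ forces $k^{-1}\in Q_h$, and similarly $k\in Q_g$ (since $hg^{-1} = k^{-1}$). In order-theoretic terms, $h$ and $g = kh$ are $\prec$-incomparable. The idea is to enlarge the field of cones so as to declare $h\prec g$, i.e. to throw $k$ into $Q_h$, then propagate this under the equivariant closure, and check that no contradiction with (1) arises — this would contradict maximality. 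Concretely, one defines $Q'_f$ to be the smallest collection containing all $Q_f$ together with $k\in Q'_h$ and closed under the equivariant rule (2); equivalently $Q'_f = Q_f \cup \{\,w \mid w \text{ is forced into } P_f \text{ by the equivariant condition from } k\in Q'_h\,\}$. The key lemma to establish is that this closure still satisfies (1), i.e. that we never force some element $w$ and its inverse $w^{-1}$ simultaneously into the same $Q'_f$. This is where the order-theoretic picture is cleanest: in terms of the partial order, adjoining $k\in Q_h$ corresponds to adding the single relation $h\prec g$ and taking the transitive closure, and one must show this does not create a cycle $x\prec y\prec x$. A cycle through the new relation would yield a chain $h \preceq x_1 \prec x_2 \preceq g$ wrapping back, forcing $g\preceq h$; but if $g$ and $h$ were already $\prec$-related this contradicts incomparability, so the transitive closure of $\prec\,\cup\,\{(h,g)\}$ is again a strict partial order — and one checks it still satisfies local invariance, since the defining condition "$x\prec tx$ or $x\prec t^{-1}x$" can only be helped by adding relations. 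That contradicts maximality of $(Q_f)$, so (3) holds.

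I expect the main obstacle to be the bookkeeping in verifying that the equivariant closure of $Q_f\cup\{k\in Q_h\}$ preserves condition (1); the slick route is to do everything on the partial-order side (showing $\prec\,\cup\,\{(h,g)\}$ has a strict-partial-order transitive closure that remains locally invariant) and only afterward translate back to cones via the correspondence already proved in the excerpt, rather than manipulating the $Q_f$ directly. As a sanity check one should also confirm the trivial direction — a locally invariant total ordering is in particular a locally invariant partial ordering — and, for the statement as phrased, that the empty group or trivial group cases cause no issue. One caveat: the literature proof (Linnell–Morris, \cite{LiWM:14}) may instead proceed by a direct construction rather than Zorn's lemma; if the Zorn argument hides a subtlety in the local-invariance preservation step, the fallback is to order $G$ as $g_1, g_2, \dots$ (using only that the relevant constraints are countable when $G$ is, or transfinitely in general) and extend one incomparable pair at a time, taking unions at limit stages.
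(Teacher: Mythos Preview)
Your Zorn's-lemma argument is correct: unions of chains of locally invariant partial orders are again locally invariant partial orders (all three defining conditions are finitary or monotone upward), and a maximal such order must be total because adjoining a single incomparable pair $(h,g)$ and taking the transitive closure yields a strictly larger strict partial order (no cycle can pass through the new edge, since that would force $g\preceq h$), while the local-invariance condition ``$x\prec tx$ or $x\prec t^{-1}x$'' is only \emph{helped} by adding relations. So the plan goes through.

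This is, however, a genuinely different route from what the paper does. In the paper, Theorem~\ref{liopt} is merely quoted from \cite{LiWM:14} with no proof at that point; the only argument in the paper that implicitly reproves it is the theorem in Section~\ref{main theorem}, which follows the Linnell--Morris strategy: use the equivalence of LIO with being \emph{diffuse} (every nonempty finite subset has an extreme point), build by induction on $|S|$ a suitable ordering of each finite symmetric subset $S\subset G$ by peeling off extreme points $a,a^{-1}$ and placing them at the top, and then invoke compactness. Your approach is slicker and entirely self-contained---it needs neither the diffuse characterisation nor any compactness---but the paper's extreme-point method buys something your argument does not: it produces, for each prescribed ``half'' $R\subset G$ with $|R\cap\{g,g^{-1}\}|=1$, a locally invariant \emph{total} order with $g\prec g^{-1}$ iff $g\in R$. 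This parametrised family is exactly what the paper needs to conclude that $\mathrm{LIO}_t(G)$ is uncountable; a bare Zorn extension gives no such control over which of $g\prec g^{-1}$ or $g^{-1}\prec g$ holds in the resulting total order.
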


Let $\mathrm{LIO}_p(G)$ denote the subset of $\wp(G)^G$ of all equivariant fields of cones for $G$, we refer to $\mathrm{LIO}_p(G)$ as the \emph{set of locally invariant partial orderings} of $G$. Similarly, let $\mathrm{LIO}_t(G)$ denote the set of all equivariant fields of total cones for $G$, we refer to $\mathrm{LIO}_t(G)$ as the \emph{set of locally invariant total orderings} of $G$.   Evidently $\mathrm{LIO}_t(G) \subset \mathrm{LIO}_p(G)$, and by Theorem \ref{liopt}, $\mathrm{LIO}_t(G)$ is nonempty whenever $\mathrm{LIO}_p(G)$ is nonempty.  Next we topologize these sets in a manner similar to $\mathrm{LO}(G)$, and show that these produce compact Hausdorff spaces.  

Equip the set $\{0, 1\}$ with the discrete topology, and consider $\wp( G \times G) = \{ 0, 1\}^{G \times G}$ with the product topology.  A subbasis for this topology is the collection of subsets $\{U_{(g,h)}\}_{(g,h) \in G \times G}$, where
\[ U_{(g,h)} = \{ S \subset G \times G \mid (g,h) \in S \}, \mbox{ and } U_{(g,h)}^C = \{ S \subset G \times G \mid (g,h) \notin S \}.
\]

We may define a bijection $\Psi: \wp(G)^G \rightarrow \wp(G \times G)$ given by 
\[ \Psi((P_f)_{f \in G})= \bigcup_{f \in G} \{f \} \times P_f.
\]
The map $\Psi$ topologizes the set $\wp(G)^G$ by using the subbasis 
\[ V_{(g,h)} = \Psi^{-1}(U_{(g,h)}) = \{ (P_f)_{f \in G}  \mid h \in P_{g} \}, \mbox{ and } V_{(g,h)}^C =\Psi^{-1}(U_{(g,h)}^C) = \{ (P_f)_{f \in G} \mid h \notin P_g \},
\]
where $(g,h) \in G \times G$.
Finally, topologize $\mathrm{LIO}_p(G)$ using the subspace topology, meaning that we use the sets
\[ W_{(g,h)} = \Psi^{-1}(U_{(g,h)}) \cap \mathrm{LIO}_p(G) = \{ (P_f)_{f \in G} \in \mathrm{LIO}_p(G)  \mid h \in P_g \}
\]
and 
\[ W^C_{(g,h)} = \Psi^{-1}(U_{(g,h)}^C) \cap \mathrm{LIO}_p(G) = \{ (P_f)_{f \in G} \in \mathrm{LIO}_p(G)  \mid h \notin P_g \}
\]
as a subbasis.  Note that $W_{(g, id)} = \emptyset$ for all $g \in G$, and that $W^C_{(g,h)} = W_{(g,h^{-1})}$ as a consequence of property (1) of an equivariant field of cones.  Therefore it suffices to take $\{W_{(g,h)} \}_{(g,h) \in G \times (G \setminus \{ id\})}$ as a subbasis for the topology on $\mathrm{LIO}_p(G)$.  We may then topologize $\mathrm{LIO}_t(G)$ via the subspace topology. This topology on $\mathrm{LIO}_t(G)$ can then be seen to be a natural extension of past work on $\mathrm{LO}(G)$, as follows:

\begin{proposition}
\label{embedding}
The map $i:\mathrm{LO}(G) \rightarrow \mathrm{LIO}_t(G)$ given by 
\[ P \mapsto (P_f)_{f \in G}, \mbox{ where } P_f = P \mbox{ for all $f \in G$},
\]
is an embedding.
\end{proposition}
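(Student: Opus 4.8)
The plan is to verify, in order, that $i$ lands in $\mathrm{LIO}_t(G)$, that $i$ is injective, and that $i$ is a homeomorphism onto its image. (If $G$ is not left-orderable then $\mathrm{LO}(G) = \emptyset$ and there is nothing to prove, so this only has content when $G$ is left-orderable, but the argument does not need that hypothesis.) For the first point, fix a positive cone $P \in \mathrm{LO}(G)$ and set $P_f = P$ for all $f \in G$; I must check properties (1)--(3) of an equivariant field of total cones. Property (1) is immediate, since $P_f \cup P_f^{-1} = P \cup P^{-1} = G \setminus \{ id \}$. For the equivariant condition (2), observe that when the field is constant the indices $f$ and $gf$ are irrelevant, so the implication ``$g \in P_f$ and $h \in P_{gf}$ imply $hg \in P_f$'' reduces to ``$g, h \in P$ imply $hg \in P$'', which is exactly $P \cdot P \subset P$. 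For condition (3), given $g \neq h$ we have $gh^{-1} \neq id$, so from $P \cap P^{-1} = \emptyset$ and $G \setminus \{ id \} = P \cup P^{-1}$ either $gh^{-1} \in P = P_h$ or $hg^{-1} = (gh^{-1})^{-1} \in P = P_g$. Hence $i(P) \in \mathrm{LIO}_t(G)$.

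Injectivity is trivial: reading off any single component of the field $i(P)$ recovers $P$. For continuity it suffices to check preimages of the subbasic sets $W_{(g,h)}$, and here
\[
i^{-1}(W_{(g,h)}) = \{ P \in \mathrm{LO}(G) \mid h \in P_g \} = \{ P \in \mathrm{LO}(G) \mid h \in P \} = U_h \cap \mathrm{LO}(G),
\]
which is open in $\mathrm{LO}(G)$.

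Finally, to conclude that $i$ is an embedding I would either invoke the standard fact that a continuous injection from a compact space (namely $\mathrm{LO}(G)$) into a Hausdorff space (namely $\mathrm{LIO}_t(G)$, which is Hausdorff as a subspace of the Hausdorff product $\wp(G \times G)$) is a homeomorphism onto its image; or argue directly that the inverse is continuous on $i(\mathrm{LO}(G))$, since that inverse sends $(P_f)_{f \in G} \mapsto P_{id}$ and the preimage of $U_h \cap \mathrm{LO}(G)$ under it is $W_{(id,h)} \cap i(\mathrm{LO}(G))$, open in the image. There is no real obstacle in any of this; the only points that deserve a moment's care are that the equivariant condition collapses to $P \cdot P \subset P$ for a constant field, and that condition (3) is simply a restatement of totality of the left-ordering.
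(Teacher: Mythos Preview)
Your argument is correct and follows essentially the same route as the paper: verify that the constant field satisfies (1)--(3), note injectivity, compute $i^{-1}(W_{(g,h)}) = U_h$ on subbasic sets, and then invoke the compact-to-Hausdorff fact to upgrade a continuous injection to an embedding. You give more detail than the paper in checking (1)--(3) and you also supply an optional direct proof that the inverse is continuous, but neither of these changes the underlying approach.
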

\begin{proof}
First note that if $P$ is the positive cone of a left-ordering, then $i(P)$ satisfies properties (1)--(3) and so $i(P)$ is an equivariant field of total cones.
Moreover, the map $i$ is injective.

We topologize $\mathrm{LIO}_t(G)$ using the subspace topology of the topology of $\mathrm{LIO}_p(G)$, meaning we use the sets
\[ W_{(g,h)} = \{ (P_f)_{f \in G} \in \mathrm{LIO}_t(G)  \mid h \in P_g \}
\]
and 
\[ W^C_{(g,h)} = \{ (P_f)_{f \in G} \in \mathrm{LIO}_t(G)  \mid h \notin P_g \}
\]
as a subbasis (note that we are slightly abusing notation here, by using $W_{(g,h)}$ and $W^C_{(g,h)}$ to denote similarly defined subsets of $\mathrm{LIO}_p(G)$ and $\mathrm{LIO}_t(G)$).

We topologize ${\rm LO}(G)$ using the subspace topology arising from $\wp(G)$, meaning we use the sets
\[ U_g=\{ S \subset G \mid g \in S\}\cap {\rm LO}(G)=\{P\in {\rm LO}(G)\;|\; g\in P\}
\] 
as a subbasis.


Then we note
$$i^{-1}(W_{(g,h)})=\{P\in {\rm LO}(G)\;|\; h\in P\}=U_h$$ and 
$$i^{-1}(W_{(g,h)}^C)=\{P\in {\rm LO}(G)\;|\; h\notin P\}=U_h^C.$$
Therefore, the map $i$ is continuous. Since ${\rm LO}(G)$ is compact and ${\rm LIO}_t(G)$ is Hausdorff, the map $i$ is an embedding.
\end{proof}

\begin{proposition}
\label{LIO_compact} The spaces $\mathrm{LIO}_p(G)$ and $\mathrm{LIO}_t(G)$ are compact.
\end{proposition}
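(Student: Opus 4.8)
The plan is to realize both spaces as closed subsets of a space already known to be compact, and then invoke the fact that a closed subspace of a compact space is compact. By Tychonoff's theorem the product $\wp(G\times G)=\{0,1\}^{G\times G}$ is compact, and the bijection $\Psi$ was introduced precisely to transport this topology to $\wp(G)^G$; hence $\wp(G)^G$ is compact, and $\mathrm{LIO}_p(G)$ and $\mathrm{LIO}_t(G)$ carry the subspace topologies coming from it. It therefore suffices to show that $\mathrm{LIO}_p(G)$ is closed in $\wp(G)^G$ and that $\mathrm{LIO}_t(G)$ is closed in $\mathrm{LIO}_p(G)$, since the latter will already be closed in $\wp(G)^G$.

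First I would rewrite the defining conditions of an equivariant field of cones using the subbasic sets $W_{(g,h)}$ and $W_{(g,h)}^C$, each of which is clopen in $\wp(G)^G$. Condition (1), that $P_f\cup P_f^{-1}=G\setminus\{id\}$ for all $f$, is the conjunction of two requirements: that $id\notin P_f$ for all $f$, which cuts out $\bigcap_{f\in G}W^C_{(f,id)}$; and that for each $f\in G$ and each $g\in G\setminus\{id\}$ one has $g\in P_f$ or $g^{-1}\in P_f$, which cuts out $\bigcap_{f\in G,\ g\neq id}\bigl(W_{(f,g)}\cup W_{(f,g^{-1})}\bigr)$. For condition (2), fix a triple $f,g,h\in G$: the implication ``$g\in P_f$ and $h\in P_{gf}$ $\Rightarrow$ $hg\in P_f$'' holds exactly on the set $W^C_{(f,g)}\cup W^C_{(gf,h)}\cup W_{(f,hg)}$, so condition (2) cuts out the intersection of these sets over all $(f,g,h)\in G^3$. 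Each displayed set is a finite union of clopen sets, hence clopen, and an arbitrary intersection of closed sets is closed; intersecting all of them over the relevant index sets exhibits $\mathrm{LIO}_p(G)$ as a closed subset of $\wp(G)^G$, which is therefore compact.

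For $\mathrm{LIO}_t(G)$ I would add only the extra condition (3): for $g\neq h$, either $gh^{-1}\in P_h$ or $hg^{-1}\in P_g$. For a fixed pair $g\neq h$ this holds exactly on the clopen set $W_{(h,gh^{-1})}\cup W_{(g,hg^{-1})}$, so condition (3) cuts out the intersection of these sets over all pairs $g\neq h$; hence $\mathrm{LIO}_t(G)$ is closed in $\mathrm{LIO}_p(G)$, and thus compact. There is no genuine obstacle here beyond the bookkeeping: the only point needing care is that a universally quantified disjunction (as in (1) and (3)), and likewise an implication (as in (2), being a disjunction of negations), must be expressed as an intersection over the quantified variables of a finite union of subbasic clopen sets, after which closedness — and hence compactness — is immediate. (If Hausdorffness is also wanted, it is inherited from $\wp(G\times G)=\{0,1\}^{G\times G}$, a product of discrete, hence Hausdorff, spaces.)
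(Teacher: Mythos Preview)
Your proof is correct and follows essentially the same approach as the paper's: both show that $\mathrm{LIO}_p(G)$ and $\mathrm{LIO}_t(G)$ are closed subsets of the compact space $\wp(G)^G$ by expressing the defining conditions (1)--(3) in terms of the subbasic clopen sets. The only cosmetic difference is that the paper shows the complements are open (writing them as unions of finite intersections of subbasic sets), whereas you write the sets themselves directly as intersections of finite unions of clopen sets; these are De~Morgan duals of one another.
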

\begin{proof}
We have that \begin{multline*}\mathrm{LIO}_p(G)=\{(P_f)_{f\in G}\in\wp(G)^G\; |\; P_f\cup P_f^{-1} = G\setminus\{ id\} \;{\rm for\; all\;} f \in G,\; {\rm and}\; {\rm for\; all} \; \\
f, g, h \in G,\; {\rm if} \; g\in P_f\; {\rm and} \; h\in P_{gf}, \; {\rm then} \;hg\in P_f\}.
\end{multline*}
Let $$A:=\{(P_f)_{f\in G}\in\wp(G)^G\; |\; P_f\cup P_f^{-1} = G\setminus\{ id\} \;{\rm for\; all\;} f \in G\}$$ and
$$B:=\{(P_f)_{f\in G}\in\wp(G)^G\; |\; {\rm for\; all} \;
f, g, h \in G,\; {\rm if} \; g\in P_f\; {\rm and} \; h\in P_{gf}, \; {\rm then} \;hg\in P_f\}.$$
Then ${\rm LIO}_p(G)=A\cap B$. To show that ${\rm LIO}_p(G)$ is compact it suffices to show that is closed, which follows from showing that both $A$ and $B$ are closed.

To show that $A$ is closed we show that $A^C$ is open.  We note that $A^C$ consists of two sets, the first is:
\begin{align*}
A_1&= \{(P_f)_{f\in G}\in\wp(G)^G\; |\;  {\rm there\; exists}\; g\in G\setminus\{ id\}\; {\rm such \; that}\; g\notin P_f\cup P_f^{-1} \;{\rm for\; some\;} f \in G\}\\
&= \{(P_f)_{f\in G}\in\wp(G)^G\; |\;  {\rm there\; exists}\; g\in G\setminus\{ id\}\; {\rm such \; that}\; g\notin P_f \;{\rm and}\; g^{-1}\notin P_f \;{\rm for\; some\;} f \in G\}\\
&= \bigcup_{f\in G, g\in G\setminus\{id\}}(W_{(f, g)}^C\cap W_{(f,g^{-1})}^C).
\end{align*}
The second is:
\begin{align*}
A_2&= \{(P_f)_{f\in G}\in\wp(G)^G\; |\;  \mbox{there exists $f\in G$ such that $id \in P_f\cup P_f^{-1}$}\}\\
&= \bigcup_{f\in G}W_{(f,id)}.
\end{align*}Since $A^C = A_1 \cup A_2$ and both $A_1$ and $A_2$ are open, so is $A^C$.

To show that $B$ is closed we show that $B^C$ is open.
\begin{align*}
B^C&= \{(P_f)_{f\in G}\in\wp(G)^G\; |\;  {\rm there\; exists}\; g\in P_f \; {\rm and} \; h\in P_{gf}\; {\rm such \; that}\; hg\notin P_{f} \;{\rm for \;some\;} f \in G\}\\
&= \bigcup_{f,g, h\in G}(W_{(f, g)}\cap W_{(gf,h)}\cap W_{(f, hg)}^C).
\end{align*}
Hence $B^C$ is open. Therefore, ${\rm LIO}_p(G)$ is closed, and hence compact.

To verify that ${\rm LIO}_t(G)$ is also compact, we need only verify that the condition (3) defines a closed set $D$.  Recall that condition (3) is that for every $g, h \in G$ with $g \neq h$, either $gh^{-1} \in P_h$ or $hg^{-1} \in P_g$ must hold.  Therefore
\begin{align*}
 D^C& = \{(P_f)_{f\in G}\in\wp(G)^G\; |\;  \mbox{$\exists g, h \in G$ with $g \neq h$ such that $gh^{-1} \notin P_h$ and $hg^{-1} \notin P_g$}\} \\
&= \bigcup_{g,h\in G, g \neq h}(W_{(h, gh^{-1})}^C \cap W_{(g,hg^{-1})}^C),
\end{align*}
which is an open set, and therefore $D$ is closed.  It follows that ${\rm LIO}_t(G)$ is compact.
\end{proof}

We conclude that there are nested, compact spaces:
\[ i(\mathrm{LO}(G)) \subset \mathrm{LIO}_t(G) \subset \mathrm{LIO}_p(G) \subset \wp(G)^G.
\]

There is a $(G\times G)$-action on $\wp(G)^G$ that preserves each of these spaces, defined as follows.  Given $(g,h) \in G \times G$, set
\[ (g,h) \cdot (P_f)_{f \in G} = (h^{-1}P_{hfg^{-1}}h)_{f \in G}.
\]
This is an action by homeomorphisms on the space $\wp(G)^G$, since for any $(g,h) \in G \times G$ $$(g, h)\cdot W_{(a,b)}=(g, h) \cdot \{ (P_f)_{f \in G}  \mid b \in P_a \} = \{ (P_f)_{f \in G}  \mid h^{-1}bh \in P_{h^{-1}ag}\}$$ and 
$$(g, h)\cdot W_{(a,b)}^C=(g, h) \cdot \{ (P_f)_{f \in G}  \mid b \notin P_a \} = \{ (P_f)_{f \in G}  \mid h^{-1}bh \notin P_{h^{-1}ag}\},$$ for any $a, b \in G.$ So the action maps subbasic open sets to subbasic open sets.

\begin{proposition}
\label{the actions}
The spaces $i(\mathrm{LO}(G)), \mathrm{LIO}_t(G)$, and $ \mathrm{LIO}_p(G)$ are invariant under the $(G\times G)$-action on $\wp(G)^G$.
\end{proposition}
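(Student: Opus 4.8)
The plan is to check, for each of the three subspaces, that applying an arbitrary element $(g,h)\in G\times G$ to a point of the subspace lands back inside it; since the excerpt already establishes that the $(G\times G)$-action on $\wp(G)^G$ is by homeomorphisms, invariance of these sets is all that is needed. I would argue along the chain of inclusions $i(\mathrm{LO}(G))\subset\mathrm{LIO}_t(G)\subset\mathrm{LIO}_p(G)$, handling the largest space first and then only checking the extra defining conditions for the smaller ones.

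\emph{Invariance of $\mathrm{LIO}_p(G)$.} Fix $(P_f)_{f\in G}\in\mathrm{LIO}_p(G)$ and write $(Q_f)_{f\in G}=(g,h)\cdot(P_f)_{f\in G}$, so that $Q_f=h^{-1}P_{hfg^{-1}}h$. Property (1) is immediate because conjugation is an automorphism of $G$: $Q_f\cup Q_f^{-1}=h^{-1}\bigl(P_{hfg^{-1}}\cup P_{hfg^{-1}}^{-1}\bigr)h=h^{-1}(G\setminus\{id\})h=G\setminus\{id\}$. For the equivariant condition (2), suppose $a\in Q_f$ and $b\in Q_{af}$; unwinding the definitions gives $hah^{-1}\in P_{hfg^{-1}}$ and $hbh^{-1}\in P_{hafg^{-1}}$. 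Put $f'=hfg^{-1}$ and $a'=hah^{-1}$, so that $a'f'=hafg^{-1}$; then (2) applied to $(P_f)_{f\in G}$ gives $(hbh^{-1})(hah^{-1})=h(ba)h^{-1}\in P_{f'}$, i.e.\ $ba\in Q_f$. Hence $(Q_f)_{f\in G}\in\mathrm{LIO}_p(G)$.

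\emph{Invariance of $\mathrm{LIO}_t(G)$ and of $i(\mathrm{LO}(G))$.} Given the previous step, for $\mathrm{LIO}_t(G)$ it remains only to see that condition (3) passes to $(Q_f)_{f\in G}$. For $a\neq b$ in $G$ set $u=hag^{-1}$ and $v=hbg^{-1}$; then $u\neq v$, and a short computation gives $uv^{-1}=(hah^{-1})(hbh^{-1})^{-1}$ and $vu^{-1}=(hbh^{-1})(hah^{-1})^{-1}$. Applying (3) for $(P_f)_{f\in G}$ to the pair $u,v$ yields $uv^{-1}\in P_v$ or $vu^{-1}\in P_u$, and these translate to $ab^{-1}\in Q_b$ or $ba^{-1}\in Q_a$ respectively, which is (3) for $(Q_f)_{f\in G}$. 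Finally, if $(P_f)_{f\in G}=i(P)$ for a positive cone $P$, then $Q_f=h^{-1}P_{hfg^{-1}}h=h^{-1}Ph$ is independent of $f$, and $h^{-1}Ph$ is again a positive cone (the conjugate of a positive cone is a positive cone), so $(g,h)\cdot i(P)=i(h^{-1}Ph)\in i(\mathrm{LO}(G))$.

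I do not anticipate a real obstacle: the proposition reduces to careful bookkeeping with the conjugation-and-reindexing rule $(g,h)\cdot(P_f)_{f\in G}=(h^{-1}P_{hfg^{-1}}h)_{f\in G}$. The one place requiring attention is lining up indices and arguments so that conditions (2) and (3) can be quoted verbatim; the only facts used beyond this are that conjugation is an automorphism (so it commutes with inversion and with taking complements in $G\setminus\{id\}$) and that it permutes positive cones.
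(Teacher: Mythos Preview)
Your proof is correct and follows essentially the same approach as the paper: verify conditions (1)--(3) for the transformed family $(h^{-1}P_{hfg^{-1}}h)_{f\in G}$ by conjugating and reindexing, and handle $i(\mathrm{LO}(G))$ by observing that the action reduces to conjugation of the single positive cone. Your version is in fact slightly more careful in the bookkeeping for condition (2), concluding $ba\in Q_f$ (as required by the definition), whereas the paper writes $ab$ at that spot.
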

\begin{proof}
It is clear from the definition of the inclusion $i :\mathrm{LO}(G) \rightarrow \mathrm{LIO}_t(G)$ that the image $i(\mathrm{LO}(G))$ is invariant under this $(G \times G)$-action.

So, we next show that if $(P_f)_{f \in G} \in \mathrm{LIO}_p(G)$, then $(h^{-1}P_{hfg^{-1}}h)_{f \in G}$ is indeed an equivariant field of partial cones.  To see this, note that for every $f \in G$, $P_{hfg^{-1}} \cup P_{hfg^{-1}}^{-1} = G \setminus \{ id \}$, since $(P_f)_{f \in G}$ is an equivariant field of cones.  It follows that $h^{-1}P_{hfg^{-1}}h \cup (h^{-1}P_{hfg^{-1}}h)^{-1} = G \setminus \{ id \}$ for all $f \in G$.

Now suppose that $a \in h^{-1}P_{hfg^{-1}}h$ and $b \in h^{-1}P_{hafg^{-1}}h$.  Then $hah^{-1} \in P_{hfg^{-1}}$ and $hbh^{-1} \in P_{hafg^{-1}}$, so equivariance of $(P_f)_{f \in G}$ yields $habh^{-1} \in P_{hfg^{-1}}$, and thus $ab \in h^{-1}P_{hfg^{-1}}h$.  This completes the proof that $(h^{-1}P_{hfg^{-1}}h)_{f \in G}$ is an equivariant field of cones.

To check that the $(G \times G)$-action also preserves $\mathrm{LIO}_t(G)$, we additionally check that if $(P_f)_{f \in G}$ satisfies (3), then so does $(h^{-1}P_{hfg^{-1}}h)_{f \in G}$.  To this end, let $a, b \in G$, $a \neq b$ be given.  Then $ab^{-1} \in h^{-1}P_{hbg^{-1}}h$ or $ba^{-1} \in h^{-1}P_{hag^{-1}}h$ holds if and only if $(hag^{-1})(gb^{-1}h^{-1}) \in P_{hbg^{-1}}$ or $(hbg^{-1})(ga^{-1}h^{-1}) \in P_{hag^{-1}}$.  This latter condition holds since $(P_f)_{f \in G} \in \mathrm{LIO}_t(G)$.
\end{proof}

\section{Locally invariant orderings from left-orderings}

In this section, we construct explicit uncountable subsets of $\mathrm{LIO}_t(G)$ and $\mathrm{LIO}_p(G) \setminus \mathrm{LIO}_t(G)$ when $G$ is a left-orderable group.  In contrast, such explicit families are not provided by the existence results of Section \ref{main theorem}.

 As $\mathrm{LO}(G)$ is either uncountable or finite, this approach naturally breaks into two cases.  First, when $G$ is uncountable, there are straightforward constructions that result in continuous maps $\mathrm{LO}(G) \rightarrow \mathrm{LIO}_t(G)$ and $\mathrm{LO}(G) \rightarrow \mathrm{LIO}_p(G) \setminus \mathrm{LIO}_t(G)$ each with uncountable image.  On the other hand, when $\mathrm{LO}(G)$ is finite such maps produce only finitely many locally invariant orderings.  So in this case, we use a technique which exploits the structure of groups that admit only finitely many left-orderings in order to produce uncountably many explicit locally invariant orderings, both partial and total.

\subsection{Left-orderable groups admitting uncountably many left-orderings}

We already have an embedding $i:\mathrm{LO}(G) \rightarrow \mathrm{LIO}_t(G)$ from Proposition \ref{embedding} that will serve our purpose, below we construct a continuous map $\mathrm{LO}(G) \rightarrow \mathrm{LIO}_p(G) \setminus \mathrm{LIO}_t(G)$.

\begin{proposition}
\label{partial embedding}
Let $P\in \mathrm{LO}(G)$. Define $\iota(P)$ to be the sequence of subsets $(P_f)_{f\in G}$ given by $P_{id} = P \cup P^{-1}$ and, for each $f \in G$:
\[
    P_f= 
\begin{cases}
   P^{-1},& \text{if } f \in P^{-1}\\
   P  & \text{if } f \in P.
\end{cases}
\]
Then $\iota: \mathrm{LO}(G) \rightarrow \mathrm{LIO}_p(G) \setminus \mathrm{LIO}_t(G)$ is a continuous map.
\end{proposition}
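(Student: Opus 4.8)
The plan is to verify, in turn, that each $\iota(P)$ is an equivariant field of cones (properties (1)--(2)), that it fails the totality condition (3), and that $\iota$ is continuous; throughout we assume $G$ is nontrivial, since for the trivial group $\mathrm{LIO}_p(G) = \mathrm{LIO}_t(G)$ and the target is empty. The only facts about $P$ that enter are that $\{P, P^{-1}, \{id\}\}$ partitions $G$ and that $P$ and $P^{-1}$ are subsemigroups.

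Property (1) is immediate: $P_{id} = G \setminus \{id\}$ satisfies it, and for $f \neq id$ the set $P_f$ is one of $P$, $P^{-1}$, each of which $Q$ satisfies $Q \cup Q^{-1} = G \setminus \{id\}$. For the equivariant condition (2) I would fix $f$ and split into the cases $f \in P$, $f \in P^{-1}$, $f = id$. If $f \in P$ then $P_f = P$, so $g \in P_f$ forces $gf \in P\cdot P \subseteq P$, whence $P_{gf} = P$; then $h \in P_{gf}$ gives $hg \in P \cdot P \subseteq P = P_f$. The case $f \in P^{-1}$ is identical with $P^{-1}$ throughout. The case $f = id$ is the one with content: here $P_f = G \setminus \{id\}$, so from $g \in P_f$ and $h \in P_{gf} = P_g$ we only need $hg \neq id$, i.e.\ $h \neq g^{-1}$, and indeed $g^{-1} \notin P_g$ because $P_g$ is whichever of $P$, $P^{-1}$ contains $g$ while $g^{-1}$ lies in the other one (using $P \cap P^{-1} = \emptyset$). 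I expect this last case to be the only subtle point. Finally, to see $\iota(P) \notin \mathrm{LIO}_t(G)$, pick $h \in P$ and $g \in P^{-1}$ (possible since $G$ is nontrivial), so that $g \neq h$, $P_h = P$, and $P_g = P^{-1}$; then $gh^{-1} \in P^{-1}\cdot P^{-1} \subseteq P^{-1}$ gives $gh^{-1} \notin P_h$, and $hg^{-1} \in P\cdot P\subseteq P$ gives $hg^{-1} \notin P_g$, so (3) fails at $(g,h)$.

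For continuity it suffices to check that $\iota^{-1}$ of each subbasic open set $W_{(g,h)}$ or $W^C_{(g,h)}$ is open in $\mathrm{LO}(G)$. Unwinding definitions, $\iota(P) \in W_{(g,h)}$ iff $h \in P_g$; for $g = id$ this means $h \neq id$, so the preimage is $\mathrm{LO}(G)$ or $\emptyset$, while for $g \neq id$ one gets $\iota^{-1}(W_{(g,h)}) = (U_g \cap U_h) \cup (U_{g^{-1}} \cap U_{h^{-1}})$, using $g \in P^{-1} \iff g^{-1} \in P$ --- a finite union of finite intersections of subbasic opens of $\mathrm{LO}(G)$, hence open. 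The same bookkeeping gives $\iota^{-1}(W^C_{(g,h)}) = (U_g \cap U_{h^{-1}}) \cup (U_{g^{-1}} \cap U_h)$ when $g,h \neq id$, with $\mathrm{LO}(G)$ or $\emptyset$ in the degenerate cases, so this is open as well. Hence $\iota$ is continuous and the proposition follows. The only real work is organizing the case analysis for (2) cleanly; everything else is direct computation.
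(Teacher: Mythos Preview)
Your proof is correct and follows essentially the same approach as the paper's: the same three-case split for verifying equivariance (according to whether the base point lies in $P$, $P^{-1}$, or is the identity), the same choice of witnesses for the failure of totality, and the same computation of preimages of subbasic opens for continuity. Your treatment of the $f=id$ case of equivariance (showing $h\neq g^{-1}$ directly) is marginally slicker than the paper's, and your explicit remark that $G$ must be nontrivial is a point the paper leaves implicit, but these are cosmetic differences.
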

\begin{proof}
First we show that the sequence of subsets $(P_f)_{f\in G}$ is an equivariant field of cones.

For any $f\in G$, we have that $P_f\cup P^{-1}_f = G\setminus\{ id\}$. Let $f, g, h\in G$ be such that $g\in P_h$ and $f\in P_{gh}$. To show that $fg\in P_h$, we have three cases:

{\bf Case 1:} Assume that $h\in P$. Then $P_h=P$ and so $g\in P$. Hence, $gh\in P$ which implies that $f\in P_{gh}=P$. Therefore, $fg\in P=P_h.$

{\bf Case 2:} Assume that $h\in P^{-1}$. Then $P_h=P^{-1}$ and so $g\in P_h=P^{-1}$. Hence, $gh\in P^{-1}$ which implies that $f\in P_{gh}=P^{-1}$. Therefore, $fg\in P^{-1}=P_h.$

{\bf Case 3:} Assume that $h=id$. Then $g\in P_h=P\cup P^{-1}$, which implies that $g\neq id$. Hence $f\in P_{gh}=P_g$. If $g\in P$, then $f\in P_g=P$ and so $fg\in P\subset P\cup P^{-1}=P_h.$ If $g\in P^{-1}$, then $f\in P_g=P^{-1}$ and so $fg\in P^{-1}\subset P\cup P^{-1}=P_h.$

Next, note that $(P_f)_{f \in G} \notin \mathrm{LIO}_t(G)$.  We need only check that $(P_f)_{f \in G}$ do not satisfy: whenever $f, g$ are different then either $fg^{-1} \in P_g$ or $gf^{-1} \in P_f$. Let $f\in P^{-1}$ and $g\in P$. Then $f^{-1}\in P$ which implies that $gf^{-1}\notin P^{-1}=P_f$. On the other hand, we have that $g^{-1}\in P^{-1}$ which implies that $fg^{-1}\notin P=P_g$. 

Last, observe that this map is an embedding.  Recall that we topologize $\mathrm{LIO}_p(G)$ by using
\[ W_{(g,h)} = \{ (P_f)_{f \in G} \in \mathrm{LIO}_p(G)  \mid h \in P_g \}
\]
and 
\[ W^C_{(g,h)} = \{ (P_f)_{f \in G} \in \mathrm{LIO}_p(G)  \mid h \notin P_g \}
\]
as a subbasis.

We topologize ${\rm LO}(G)$ using the subspace topology arising from $\wp(G)$, meaning we use the sets
\[ U_g=\{ S \subset G \mid g \in S\}\cap {\rm LO}(G)=\{P\in {\rm LO}(G)\;|\; g\in P\}
\] 
as a subbasis. 

Let $g,h\in G$, we have two cases:

{\bf Case 1}: Assume that $g\neq id$. Then,

$$\iota^{-1}(W_{(g,h)})=\{P\in {\rm LO}(G)\;|\; h\in P \; {\rm if}\; g\in P,\; {\rm or }\; h \in P^{-1} \;{\rm if}\; g\in P^{-1}\}= (U_g\cap U_h)\cup (U_{g^{-1}}\cap U_{h^{-1}}) $$ and 
$$\iota^{-1}(W_{(g,h)}^C)=\{P\in {\rm LO}(G)\;|\; h\notin P \; {\rm if}\; g\in P,\; {\rm or }\; h \notin P^{-1} \;{\rm if}\; g\in P^{-1} \},$$
we have two subcases:

{\bf Subcase 1}: If $h\neq id$ then
$$\iota^{-1}(W_{(g,h)}^C)=\{P\in {\rm LO}(G)\;|\; h\notin P \; {\rm if}\; g\in P,\; {\rm or }\; h \notin P^{-1} \;{\rm if}\; g\in P^{-1} \}=(U_g\cap U_{h^{-1}})\cup (U_{g^{-1}}\cap U_{h}).$$

{\bf Subcase 2}: If $h=id$ then
$$\iota^{-1}(W_{(g,h)}^C)=\{P\in {\rm LO}(G)\;|\; h\notin P \; {\rm if}\; g\in P,\; {\rm or }\; h \notin P^{-1} \;{\rm if}\; g\in P^{-1} \}=U_g \cup U_{g^{-1}}={\rm LO}(G).$$
{\bf Case 2}: Assume that $g=id$. Then,

$$\iota^{-1}(W_{(g,h)})=\{P\in {\rm LO}(G)\;|\; h\in P\cup P^{-1} \}= {\rm LO}(G)  \; {\rm if }\; h\ne  id\; {\rm or}\; \emptyset \; {\rm if}\; h= id $$ and 
$$\iota^{-1}(W_{(g,h)}^C)=\{P\in {\rm LO}(G)\;|\; h\notin P\cup P^{-1}\;\}=\emptyset \; {\rm if}\; h\neq id \; {\rm or }\; {\rm LO}(G)  \; {\rm if }\; h= id.$$

Therefore, the map $\iota$ is continuous. 
\end{proof}

While these maps exist for any left-orderable group $G$, our concern is with the cardinality of their images when the domain is uncountable.  We find:

\begin{theorem}
\label{lo theorem}
Suppose that $G$ is a left-orderable group and that $\mathrm{LO}(G)$ is uncountable.  Then the images of the maps $i: \mathrm{LO}(G) \rightarrow \mathrm{LIO}_t(G)$ and $\iota: \mathrm{LO}(G) \rightarrow \mathrm{LIO}_p(G) \setminus \mathrm{LIO}_t(G)$ of Propositions \ref{embedding} and \ref{partial embedding} provide explicit constructions of uncountable subspaces of $\mathrm{LIO}_t(G)$ and $ \mathrm{LIO}_p(G) \setminus \mathrm{LIO}_t(G)$ respectively.
\end{theorem}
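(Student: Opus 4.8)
The plan is to leverage Propositions \ref{embedding} and \ref{partial embedding} directly, reducing the theorem to the statement that a continuous injection of an uncountable compact Hausdorff space has uncountable image, which is immediate since injectivity forces $|f(X)| = |X|$. So the real content is verifying that both maps $i$ and $\iota$ are in fact \emph{injective}, since Proposition \ref{embedding} already asserts $i$ is an embedding (hence injective), but Proposition \ref{partial embedding} as stated only claims $\iota$ is continuous -- although its proof remarks "observe that this map is an embedding." I would therefore first confirm injectivity of $\iota$: if $P \neq Q$ are distinct positive cones, pick $g \in P \setminus Q$ (so $g^{-1} \in Q$, i.e. $g \in Q^{-1}$); then in $\iota(P)$ the component $P_g = P$ while in $\iota(Q)$ the component $Q_g = Q^{-1}$, and since $g \in P$ but $g \notin Q^{-1}$ (as $g \notin Q$ would be needed... more carefully: $P \neq Q^{-1}$ since both are positive cones, so there is $h \in P \setminus Q^{-1}$, giving $h \in P_g = P$ but $h \notin Q_g = Q^{-1}$), the fields differ at coordinate $(g,h)$. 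This shows $\iota$ is injective.

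Next I would assemble the argument: since $\mathrm{LO}(G)$ is compact (Tychonoff plus closedness, as recalled in the Background) and, by hypothesis, uncountable, and since $i, \iota$ are continuous injections into the Hausdorff spaces $\mathrm{LIO}_t(G)$ and $\mathrm{LIO}_p(G)$ respectively (Hausdorffness follows from these being subspaces of $\wp(G)^G$, a product of Hausdorff spaces), each map is a homeomorphism onto its image by the closed-map lemma. In particular $i(\mathrm{LO}(G))$ and $\iota(\mathrm{LO}(G))$ are homeomorphic copies of $\mathrm{LO}(G)$ sitting inside $\mathrm{LIO}_t(G)$ and $\mathrm{LIO}_p(G) \setminus \mathrm{LIO}_t(G)$ (the latter containment being exactly the content of Proposition \ref{partial embedding}), and both have cardinality $|\mathrm{LO}(G)|$, which is uncountable. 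This yields the claimed explicit uncountable subspaces.

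I do not expect any genuine obstacle here; the theorem is essentially a corollary packaging Propositions \ref{embedding} and \ref{partial embedding} together with the hypothesis. The only point requiring a moment's care is making the injectivity of $\iota$ explicit (the proposition's statement only advertises continuity), and one should double-check that the image of $\iota$ genuinely lands in $\mathrm{LIO}_p(G) \setminus \mathrm{LIO}_t(G)$ rather than merely $\mathrm{LIO}_p(G)$ -- but this is already established in the proof of Proposition \ref{partial embedding}, where it is shown that condition (3) fails for every $\iota(P)$ (using any $f \in P^{-1}$, $g \in P$). Thus the proof reduces to: invoke the two propositions, note injectivity, note $\mathrm{LO}(G)$ is uncountable, and conclude that continuous injective images are equinumerous with the domain. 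A one-paragraph proof suffices, and no new technique beyond elementary point-set topology is needed.
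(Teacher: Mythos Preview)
Your claim that $\iota$ is injective is incorrect. In fact $\iota(P) = \iota(P^{-1})$ for every $P \in \mathrm{LO}(G)$, as the paper's own proof of this theorem explicitly points out. To see this, set $Q = P^{-1}$. For $f \in P$ we have $f \in Q^{-1}$, so the $f$-th component of $\iota(Q)$ is $Q^{-1} = P$; for $f \in P^{-1}$ we have $f \in Q$, so the $f$-th component of $\iota(Q)$ is $Q = P^{-1}$; and at $f = \id$ both give $P \cup P^{-1}$. Thus $\iota(P) = \iota(Q)$. Your attempted verification of injectivity breaks precisely at the step ``$P \neq Q^{-1}$ since both are positive cones'': this is a non sequitur, since $Q^{-1}$ is itself the positive cone of the opposite left-ordering, and nothing prevents $P = Q^{-1}$. (The remark ``observe that this map is an embedding'' inside the proof of Proposition \ref{partial embedding} appears to be a slip in the paper; the proof of the present theorem corrects it.)

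The error is not fatal to the conclusion you want: the paper simply observes that $\iota$ is two-to-one, and a two-to-one image of an uncountable set is still uncountable. So your overall plan---invoke the two propositions, use that $\mathrm{LO}(G)$ is uncountable, and read off the cardinalities of the images---is exactly the paper's approach. You just need to replace the false injectivity argument for $\iota$ with the correct observation that $\iota$ has fibers of size at most two (since, as computed above, $\iota(P) = \iota(Q)$ forces $P \in \{Q, Q^{-1}\}$), and drop the claim that $\iota(\mathrm{LO}(G))$ is homeomorphic to $\mathrm{LO}(G)$.
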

\begin{proof}
The map $i$ clearly has uncountable image since  $\mathrm{LO}(G)$ is uncountable and $i$ is an embedding.  On the other hand, the map $\iota$ is not an embedding, but rather a two-to-one mapping since $\iota(P) = \iota(P^{-1})$ for all $P \in \mathrm{LO}(G)$.  Nonetheless, the image of $\iota$ is an uncountable subset of $\mathrm{LIO}_p(G) \setminus \mathrm{LIO}_t(G)$ whenever $\mathrm{LO}(G)$ is uncountable. 
\end{proof}

\subsection{Left-orderable groups admitting only finitely many left-orderings}

So-called ``Tararin groups" are precisely the groups that admit only finitely many left-orderings, and have the following structure \cite[Theorem 5.2.1]{KM:96}.  A group $G$ is a Tararin group if and only if it admits a unique rational series 
\[ \{ id \} = G_0  \triangleleft G_1  \triangleleft \dots  \triangleleft G_n = G
\]
such that each quotient $G_{i+1}/G_i$ is rank one abelian, and such that the conjugation action of $G_{i+1}/G_i$ on $G_{i}/G_{i-1}$ is multiplication by a negative number for each $i$.  In particular, since every Tararin group $G$ admits a surjection onto a rank one torsion-free abelian group, we will use this surjection to explicitly construct uncountable subsets of $ \mathrm{LIO}_t(G)$ and $\mathrm{LIO}_p(G) \setminus \mathrm{LIO}_t(G)$.  We begin with a lemma:

\begin{proposition}\label{atotalord}
Suppose that $A$ is a nontrivial subgroup of $\mathbb{Q}$.  For each $\alpha \in \mathbb{R} \setminus \mathbb{Q}$ with $\alpha >0$, define a map $f_{\alpha} : \mathbb{Q} \rightarrow \mathbb{R}$ by 
\[
    f_{\alpha}(r)= 
\begin{cases}
   r,& \text{if } r \geq 0\\
   -\alpha r & \text{if } r< 0.
\end{cases}
\]
Define a binary relation $\prec_{\alpha}$ of $A \subset \mathbb{Q}$ by $a \prec_{\alpha}b$ if and only if $f_{\alpha}(a)< f_{\alpha}(b)$.  Then $\prec_{\alpha}$ is a locally invariant total ordering of $A$, and $\mathrm{LIO}_t(A)$ is uncountable.


\end{proposition}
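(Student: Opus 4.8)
The plan is to verify in turn that each $\prec_\alpha$ is a strict total ordering of $A$, that it is locally invariant, and finally that the assignment $\alpha \mapsto\, \prec_\alpha$ is injective, so that the uncountability of $\{\alpha \in \mathbb{R}\setminus\mathbb{Q} : \alpha > 0\}$ transfers to $\mathrm{LIO}_t(A)$.

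The crux is the observation that $f_\alpha$ is injective \emph{on $\mathbb{Q}$}. Both branches of $f_\alpha$ take values in $[0,\infty)$, so the only way to have $f_\alpha(r_1) = f_\alpha(r_2)$ with $r_1 \neq r_2$ is $r_1 > 0 > r_2$ with $r_1 = -\alpha r_2$; but then $\alpha = -r_1/r_2 \in \mathbb{Q}$, contradicting the choice of $\alpha$. (This is precisely where irrationality is used, and the construction genuinely breaks for rational $\alpha$, e.g.\ $f_1(r) = |r|$.) Since $\prec_\alpha$ is by definition the pullback along the injection $f_\alpha$ of the usual strict total order on $\mathbb{R}$, it is automatically irreflexive, transitive, and total. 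For local invariance, let $a, b \in A$ with $b \neq 0$; replacing $b$ by $-b$ merely interchanges the two alternatives $a \prec_\alpha a+b$ and $a \prec_\alpha a-b$, so we may assume $b > 0$. If $a \geq 0$ then $f_\alpha(a+b) = a+b > a = f_\alpha(a)$, giving $a \prec_\alpha a+b$; if $a < 0$ then $a - b < 0$ and $f_\alpha(a-b) = -\alpha a + \alpha b > -\alpha a = f_\alpha(a)$, giving $a \prec_\alpha a-b$. In either case the locally invariant condition holds, so by the correspondence between locally invariant total orderings and equivariant fields of total cones recorded in Section \ref{background}, each $\prec_\alpha$ determines a point of $\mathrm{LIO}_t(A)$.

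It remains to see that distinct positive irrationals yield distinct orderings. Since $A$ is a nontrivial subgroup of $\mathbb{Q}$ it contains some $c > 0$, hence $nc \in A$ for every positive integer $n$. For a positive rational $q = m/n$ the elements $mc$ and $nc$ of $A$ satisfy $f_\alpha(-nc) = \alpha nc$ and $f_\alpha(mc) = mc$, so $-nc \prec_\alpha mc$ if and only if $\alpha < q$. Thus from $\prec_\alpha$ one recovers the set $\{q \in \mathbb{Q} : q > 0,\ \alpha < q\}$, which has $\alpha$ as its infimum and hence determines $\alpha$; so $\alpha \mapsto\, \prec_\alpha$ is injective and $\mathrm{LIO}_t(A)$ is uncountable. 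The only delicate point is the injectivity of $f_\alpha$ on $\mathbb{Q}$; everything else is routine case-checking, with the local-invariance step being the most bookkeeping-heavy but conceptually straightforward part.
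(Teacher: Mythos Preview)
Your proof is correct and follows essentially the same approach as the paper: both verify that $f_\alpha$ is injective (hence $\prec_\alpha$ is a strict total order), check local invariance via the identical case split on the sign of $a$, and establish uncountability by comparing how $\prec_\alpha$ orders a positive element of $A$ against a negative one. Your uncountability argument is slightly cleaner---you directly recover $\alpha$ as the infimum of $\{q\in\mathbb{Q}_{>0}:-nc\prec_\alpha mc\}$, whereas the paper distinguishes $\prec_{\alpha/a}$ from $\prec_{\beta/a}$ for $a\in(\alpha,\beta)\cap A$---but the underlying idea is the same.
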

\begin{proof} We first show that the binary relation $\prec_{\alpha}$ is a locally invariant total ordering on $A$. Since the usual ordering on $\RR$ is a strict total order and $f_{\alpha}$ is injective, $\prec_{\alpha}$ is a strict total order on $A$.

To verify that $\prec_{\alpha}$ is a locally invariant total ordering on $A$, let $a, b\in A$ such that $b\neq 0$. We show that either $a\prec_{\alpha}a+b$ or $a\prec_{\alpha}a-b$. Upon replacing $b$ by $-b$, it suffices to check when $0<b$. If $0\leq a$, then $f_{\alpha}(a) = a <  a+b = f_{\alpha}(a+b)$ and so $a\prec_{\alpha} a+b$. If $a<0$, then $f_{\alpha}(a) = -\alpha a$ and $f_{\alpha}(a-b) = -\alpha(a-b)$. Since $a-b< a$, we have $-\alpha a < -\alpha(a-b)$ and so $a\prec_{\alpha} a-b$. Therefore, $\prec_{\alpha}$ is a locally invariant total ordering on $A$.





Finally, we show that $\mathrm{LIO}_t(A)$ is uncountable. Let $\alpha, \beta$ be two positive distinct elements of $\mathbb{R}\setminus\mathbb{Q}$ such that $\alpha<\beta$ and $(\alpha, \beta)\cap A\neq \emptyset$. Let $a\in (\alpha, \beta)\cap A$. Since $\alpha>0$ and $\beta > 0$, we have that $a>0$. Hence, $f_{\frac{\alpha}{a}}(-a)=\alpha < a=f_{\frac{\alpha}{a}}(a)$ which implies that $-a\prec_{\frac{\alpha}{a}} a$, and $f_{\frac{\beta}{a}}(-a)=\beta > a=f_{\frac{\beta}{a}}(a)$ which implies that $a\prec_{\frac{\beta}{a}} -a$. Therefore, the orders $\prec_\frac{\alpha}{a}$ and $\prec_\frac{\beta}{a}$ are distinct. Since $\mathbb{R}\setminus\mathbb{Q}$ is uncountable and $A$ is nontrivial, $\mathrm{LIO}_t(A)$ is uncountable.
\end{proof}

We can similarly construct infinitely many partial orderings of certain abelian groups, for which we first require a lemma.
\begin{lemma}
\label{function lemma}
Suppose that $A$ is a countable torsion-free abelian group, and $P \subset A$ is the positive cone of a bi-ordering $<$ of $A$.  Then, there exists uncountably many functions $f:P \rightarrow P$ satisfying $a <f(a)$ and $f(a)f(b) \leq f(ab)$ for every $a,b\in P$.
\end{lemma}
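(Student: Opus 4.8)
I would prove this by producing all the required functions in the restricted form $f(a) = a^{g(a)}$, where $g\colon P \to \{2,3,4,\dots\}$ is non-decreasing for the order $<$ (that is, $a \le b$ implies $g(a) \le g(b)$). Writing $A$ multiplicatively, for any $a,b\in P$ we have $ab > a$ and $ab > b$ because $a,b > 1$, so such a $g$ automatically satisfies $g(ab) \ge g(a)$ and $g(ab) \ge g(b)$. Granting this, both required properties are immediate: $a < a^{g(a)} = f(a)$ since $g(a)\ge 2$, and
\[
f(ab)\,f(a)^{-1}f(b)^{-1} \;=\; a^{\,g(ab)-g(a)}\,b^{\,g(ab)-g(b)} \;\in\; P\cup\{1\},
\]
because both exponents are non-negative; hence $f(a)f(b) \le f(ab)$. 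Distinct choices of $g$ give distinct $f$: if $g(a)\neq g'(a)$ then $a^{g(a)}\neq a^{g'(a)}$, since $A$ is torsion-free and $a\neq 1$. So the problem reduces to constructing uncountably many non-decreasing $g\colon P \to \{2,3,\dots\}$.

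For that, I would first observe that $(P,<)$ is a countably infinite totally ordered set with no greatest element: it is countable because $A$ is, infinite because $A$ is a nontrivial torsion-free abelian group, and it has no maximum because $a\in P$ forces $a^2 \in P$ with $a^2 > a$ — the same observation shows that no element of $P$ is an upper bound for $P$. Using this last fact, a routine diagonal argument (enumerate $P = \{q_1,q_2,\dots\}$ and recursively pick $a_{n+1}\in P$ above $\max(a_n,q_1,\dots,q_{n+1})$) produces a strictly increasing sequence $a_1 < a_2 < \cdots$ in $P$ that is \emph{unbounded}, in the sense that no element of $P$ lies above every $a_i$.

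Then, for each subset $S\subseteq \mathbb{N}$, I would set
\[
g_S(a) \;=\; 2 + \#\{\, i\in S : a_i \le a \,\}.
\]
Unboundedness of $(a_i)$ makes this a finite integer for every $a\in P$, and $g_S$ is visibly non-decreasing, so each $f_S(a) := a^{g_S(a)}$ has the required properties by the first paragraph. Finally, evaluating at $a = a_j$ gives $g_S(a_j) = 2 + \#(S\cap\{1,\dots,j\})$, since $a_1 < a_2 < \cdots$; thus the family $(g_S(a_j))_{j\ge 1}$ recovers $\#(S\cap\{1,\dots,j\})$ for every $j$, hence recovers $S$, so $S\mapsto g_S$ (and therefore $S\mapsto f_S$) is injective. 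This exhibits an uncountable family $\{f_S : S\subseteq\mathbb{N}\}$ of functions $P\to P$ with the two stated properties.

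The only step with any real content is the extraction of an unbounded increasing sequence from the bi-ordered group, and it is also the step genuinely responsible for the word ``uncountably'': perturbing a single fixed $g$ in finitely many places would yield only countably many functions, so one needs infinitely many independent binary choices, which the unbounded sequence supplies. Verifying the two inequalities for $f = a\mapsto a^{g(a)}$ and checking injectivity are pure bookkeeping.
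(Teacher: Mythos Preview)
Your proof is correct and follows essentially the same approach as the paper: both define $f(a)=a^{g(a)}$ for a non-decreasing exponent function $g\colon P\to\{2,3,\dots\}$ built from a cofinal sequence in $P$, and obtain uncountably many such $f$ by varying an auxiliary combinatorial parameter (the paper uses strictly increasing maps $\phi\colon\mathbb{N}_{>0}\to\mathbb{N}_{>1}$ applied to the index $n_a=\min\{i:a\le x_i\}$, whereas you use subsets $S\subseteq\mathbb{N}$ to count how many terms of the cofinal sequence lie below $a$). The verification that $f(a)f(b)\le f(ab)$ via $g(ab)\ge g(a),g(b)$ is identical in both arguments.
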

\begin{proof}
Choose a set $\{ x_1, x_2, \dots \} \subset P$ that is cofinal (meaning that for all $a \in A$ there exists $x_i$ such that $a<x_i$), which is possible because $A$ is countable.  For each $a \in A$, set 
\[ n_a = \min \{ i \in \mathbb{N}_{>0} \mid a \leq x_i \}.
\]
Let $\phi : \mathbb{N}_{>0} \rightarrow \mathbb{N}_{>1}$ be any strictly increasing function, and define $f_{\phi} : P \rightarrow P $ by $f_{\phi}(a) = a^{\phi(n_a)}$. Note the image of $\phi$ only contains integers larger than $1$, and so  $a < f_{\phi}(a) = a^{\phi(n_a)}$.  

Now as $a < ab$ and $b<ab$, it follows that $n_a \leq n_{ab}$ and $n_b \leq n_{ab}$. So $\phi(n_a) < \phi(n_{ab})$ and $\phi(n_b) < \phi(n_{ab})$ since $\phi$ is strictly increasing.  Therefore
\[f_{\phi}(a)f_{\phi}(b) = a^{\phi(n_a)}b^{\phi(n_b)} \leq a^{\phi(n_{ab})}b^{\phi(n_{ab})} = (ab)^{\phi(n_{ab})} = f_{\phi}(ab).
\]
Moreover, note that if $\phi, \psi : \mathbb{N}_{>0} \rightarrow \mathbb{N}_{>1}$ are distinct strictly increasing functions, then $f_{\phi}, f_{\psi} :P \rightarrow P$ are distinct functions satisfying the conclusions of the lemma.  Therefore there exist uncountably many such functions.
\end{proof}

\begin{proposition}
\label{uncountable partial}
Suppose that $A$ is a torsion free countable abelian group.  Then $\mathrm{LIO}_p(A) \setminus \mathrm{LIO}_t(A)$ is uncountable.
\end{proposition}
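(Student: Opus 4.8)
The plan is to turn each of the uncountably many functions supplied by Lemma~\ref{function lemma} into a distinct equivariant field of cones on $A$ that violates condition~(3). Since $A$ is torsion-free abelian it admits a bi-ordering; fix one, write $A$ additively, and let $P$ denote its positive cone (nonempty, since we may assume $A$ nontrivial). Lemma~\ref{function lemma} then produces uncountably many maps $f\colon P\to P$ with $a<f(a)$ and $f(a)+f(b)\le f(a+b)$ for all $a,b\in P$.

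Given such an $f$, I would define a field $(P_a)_{a\in A}$ by starting from the construction of Proposition~\ref{partial embedding} (which already lands in $\mathrm{LIO}_p(A)\setminus\mathrm{LIO}_t(A)$) and enlarging the cones at positive points by an $f$-dependent tail:
\[
P_0=A\setminus\{0\},\qquad P_a=-P\ \text{ for }a\in -P,\qquad P_a=P\cup\{x\in A: -x\in P,\ f(a)\le -x\}\ \text{ for }a\in P.
\]
Condition~(1) and the requirement $0\notin P_a$ are immediate, since we have only adjoined elements lying below $-f(a)<0$. Moreover distinct functions give distinct fields: if $f(a_0)\ne f'(a_0)$ then, taking the smaller of the two values, $-f(a_0)$ lies in exactly one of $P_{a_0}$ and $P'_{a_0}$. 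So once we know each $(P_a)_{a\in A}$ is an equivariant field of non-total cones we are done, since the background material already identifies such fields with locally invariant partial orderings and condition~(3) with totality.

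The substance of the argument is verifying the equivariance condition~(2): if $g\in P_a$ and $h\in P_{g+a}$ then $g+h\in P_a$. I would check this by cases on the signs of $a$ and $g$. When $a\in -P$, or when $a\in P$ with $g$ in the tail, the conclusion drops out of $-(g+h)=(-h)+(-g)$ being suitably large or small and needs nothing from $f$; when $a=0$ it reduces to checking that $-g\notin P_g$. The one case that uses the hypotheses on $f$ essentially is $a,g\in P$ with $h$ in the tail of $P_{g+a}$, i.e.\ $f(g+a)\le -h$: then $-h>g$ forces $g+h<0$, and
\[
-(g+h)=(-h)-g \ \ge\ f(g+a)-g \ \ge\ f(g)+f(a)-g \ >\ f(a),
\]
using superadditivity and $f(g)>g$, so that $g+h$ lands in the tail of $P_a$. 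The inequality $a<f(a)$ is also exactly what rules out the degenerate configurations ($g+a=0$, or $h=-g$) that would otherwise force $0$ into some cone. I expect this case analysis to be the main obstacle.

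It remains to see that each such field fails condition~(3), which here amounts to finding $a\in P$ and $b\in -P$ with $b-a\notin P_a$ (the requirement $a-b\notin P_b=-P$ being automatic, as $a-b>0$); equivalently $-b<f(a)-a$. Such a pair exists for every $f$ coming from Lemma~\ref{function lemma}: were it not so, $f(a)-a$ would be the least element $\varepsilon$ of $P$ for every $a\in P$, forcing $f(a)=a+\varepsilon$, which contradicts superadditivity (it would give $\varepsilon\ge 2\varepsilon$). Hence each of the uncountably many functions $f$ yields a distinct point of $\mathrm{LIO}_p(A)\setminus\mathrm{LIO}_t(A)$.
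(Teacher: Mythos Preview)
Your construction and verification are essentially the same as the paper's: fix a bi-ordering of $A$, use a superadditive map $f\colon P\to P$ from Lemma~\ref{function lemma} to enlarge the cones of Proposition~\ref{partial embedding} by an $f$-dependent negative tail, and check equivariance by the same case split on the signs of $a$ and $g$. The one substantive difference is that you define the tail at $a\in P$ by the non-strict condition $-x\ge f(a)$, whereas the paper uses the strict condition $b<f(a)^{-1}$; consequently the paper can witness non-totality directly with the pair $(a,\,a-f(a))$ (in your additive notation), since $-f(a)$ is excluded from its $P_a$, while in your version $-f(a)\in P_a$ and you are forced into the short contradiction argument showing that $f(a)-a$ cannot be the least positive element for every $a$. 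Both routes are correct, but had you taken the strict tail the non-totality step would collapse to a single explicit pair.
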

\begin{proof}
Let $P \subset A$ be the positive cone of a left-ordering $<$ of $A$.  Choose $f:P \rightarrow P$ satisfying $a <f(a)$ and $f(a)f(b) < f(ab)$ for all $a, b \in P$.  Now set $P_{id} = P \cup P^{-1}$, and for $a \in A$, define 
\[
    P_a= 
\begin{cases}
   P^{-1},& \text{if } a \in P^{-1}\\
   P \cup \{ b \in A \mid b<f(a)^{-1} \} & \text{if } a \in P.
\end{cases}
\]
We claim that $(P_a)_{a \in A}$ is an equivariant field of cones. To see this, note that $P_a \cup P_a^{-1} = A \setminus \{id\}$ for all $a \in A$, since $P \cup P^{-1} = A \setminus \{id\}$.

Next, suppose that $b \in P_a$ and $c \in P_{ba}$, we will see that $cb \in P_a$ by considering cases.

\noindent \textbf{Case 1.} Assume that $a = id$. It suffices to show that $cb\neq id$. Observe that $P_{ba} = P_b$ and so $c\in P_b$. First, if $b \in P^{-1}$, then $P_b = P^{-1}$ and so $c \in P^{-1}$ as well. Therefore $bc \in P^{-1}$.  So in this case $bc \neq id$.  On the other hand, suppose $b \in P$, in which case if $c \in P_b$ implies either $c \in P$ or $c< f(b)^{-1}$.  If $c \in P$ then $cb \in P$ and so $cb > id$.  If $c < f(b)^{-1}$ then as $b<f(b)$ we have $c < f(b)^{-1}<b^{-1}$, meaning $cb< id$.  In either case, $cb \neq id$.

\noindent \textbf{Case 2.} Assume that $a \in P^{-1}$.  Then $b \in P_a$ implies $b \in P^{-1}$ and so $ba \in P^{-1}$. Therefore $c \in P_{ba} = P^{-1}$ and so $cb \in P^{-1} = P_a$. 

\noindent \textbf{Case 3.} Assume that $a \in P$.  Then $b \in P_a$ and so either $b\in P$ or $b<f(a)^{-1}$.

\noindent \textbf{Subcase A.} Suppose that $b \in P$.  Then $ba \in P$ and so $c \in P_{ba}$ means either $c \in P$, or $c<f(ba)^{-1}$.  If $c \in P$ then $cb \in P \subset P_a$.  If $c<f(ba)^{-1}$ then 
$ f(a)b < f(a) f(b) < f(ab)$ implies that $c<f(ab)^{-1}<f(a)^{-1}b^{-1}$, so that $cb < f(a)^{-1}$.  Thus $cb \in P_a$.

\noindent \textbf{Subcase B.} Suppose that $b <f(a)^{-1}$. In this case, $b <f(a)^{-1} <a^{-1}$ and so $ba < id$.  Therefore $P_{ba} = P^{-1}$. Then $c\in P^{-1}$ and thus $cb<b<f(a)^{-1}$, so that $cb \in P_a$.

This shows that $R_f = (P_a)_{a \in A}$ is an equivariant field of cones. We show that for any $f :P \rightarrow P$ satisfying $a <f(a), f(a)f(b) < f(ab)$ for all $a, b \in P$, $R_f = (P_a)_{a \in A}$ is not an equivariant field of total cones. That is, there are distinct elements $a, b$ of $A$, such that $ab^{-1} \notin P_b$ and $ba^{-1} \notin P_a$. If $a\in P$, we have that
\begin{enumerate}
    \item $(af(a)^{-1})a^{-1} = f(a)^{-1}$ is not in $P_a$, because $f(a)^{-1}$ is not in $P$ and it also is not strictly less than $f(a)^{-1}$. 
    \item $a(af(a)^{-1})^{-1} = f(a)$ is not in $P_{af(a)^{-1}}$ because $af(a)^{-1} < id$, so $P_{af(a)^{-1}} = P^{-1}$.  But $f(a) \in P$.
\end{enumerate}Therefore $R_f$ is not an equivariant field of total cones.

Finally, for distinct functions $f,g :P \rightarrow P$ satisfying $a <f(a), f(a)f(b) < f(ab)$ and $a <g(a), g(a)g(b) < g(ab)$ for all $a, b \in P$, we have $R_f \neq R_g$.  As there are uncountably many such functions by Lemma \ref{function lemma}, there are uncountably many $R_f \in \mathrm{LIO}_p(A)\setminus\mathrm{LIO}_t(A)$.
\end{proof}

\begin{proposition}
\label{abelian_quotient}
Suppose that $G$ is a group, $A$ is a countable torsion-free abelian group, and that there exists a surjective homomorphism $G \stackrel{p}{\longrightarrow} A$.  Then, \begin{enumerate}[\rm (i)]
\item $\mathrm{LIO}_p(G) \setminus \mathrm{LIO}_t(G)$ is either empty or uncountable, and
\item if $A$ is a subgroup of $\mathbb{Q}$, then $\mathrm{LIO}_t(G)$ is either empty or uncountable.
\end{enumerate}
\end{proposition}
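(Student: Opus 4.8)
The plan is to \emph{pull back} the locally invariant orderings constructed on $A$ along the surjection $p : G \to A$. Given an equivariant field of cones $(Q_a)_{a \in A}$ on $A$, I would define $(P_f)_{f \in G}$ on $G$ by setting $P_f = p^{-1}(Q_{p(f)})$, and then check that this is again an equivariant field of cones. Property (1) is immediate: since $Q_a \cup Q_a^{-1} = A \setminus \{id\}$ and $p$ is a homomorphism, $p^{-1}(Q_{p(f)}) \cup p^{-1}(Q_{p(f)})^{-1} = p^{-1}(A \setminus \{id\}) = G \setminus \ker(p) \supseteq G \setminus \{id\}$. Wait --- this is the main subtlety: $p^{-1}(A \setminus \{id\})$ is $G \setminus \ker(p)$, which is strictly smaller than $G \setminus \{id\}$ when $\ker(p)$ is nontrivial, so the naive pullback fails property (1). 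So the actual construction must be: fix in addition some $(P'_f)_{f \in G} \in \mathrm{LIO}_p(G)$ (which exists by hypothesis, since we are in the ``nonempty'' case) and splice it in on the kernel directions. Concretely, for $g \in G$ put $g \in P_f$ if either $p(g) \in Q_{p(f)}$, or ($p(g) = id$ and $g \in P'_f$). One then verifies (1) and the equivariance condition (2) case-by-case, separating according to whether the relevant elements lie in $\ker(p)$ or not; the equivariance of $(Q_a)$ handles the ``generic'' case and the equivariance of $(P'_f)$ handles the case where everything collapses into the kernel, with the mixed cases being forced by (1).

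Next I would address which of these pulled-back fields land in $\mathrm{LIO}_p(G) \setminus \mathrm{LIO}_t(G)$ versus $\mathrm{LIO}_t(G)$. For part (i): take the uncountable family $R_f \in \mathrm{LIO}_p(A) \setminus \mathrm{LIO}_t(A)$ from Proposition \ref{uncountable partial} (valid since $A$ is countable torsion-free abelian), and pull each one back as above, choosing $(P'_f)$ arbitrarily once and for all. I would check that the pulled-back field still fails condition (3): if $a, b \in A$ witness the failure of (3) for $R_f$ --- i.e.\ $ab^{-1} \notin (R_f)_b$ and $ba^{-1} \notin (R_f)_a$ --- then choosing any lifts $\tilde a, \tilde b \in G$ with $p(\tilde a) = a$, $p(\tilde b) = b$ gives elements of $G$ witnessing the failure of (3) for the pullback, because membership in the pulled-back cone at a ``non-kernel'' element is decided entirely by the $A$-level data (here one uses that $ab^{-1} \ne id$ and $ba^{-1} \ne id$, so the kernel-splicing clause never fires). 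For part (ii): when $A \leq \mathbb{Q}$, use the uncountable family $\prec_\alpha$ from Proposition \ref{atotalord}, with corresponding cones $(Q^\alpha_a)_{a \in A}$; but now the splicing is more delicate, because to stay in $\mathrm{LIO}_t(G)$ the pullback must satisfy (3) \emph{everywhere}, including on kernel cosets. Here I would instead require $(P'_f)$ to be an equivariant field of \emph{total} cones on $G$ --- which exists by Theorem \ref{liopt} since $G$ is LIO --- and then verify that the spliced field satisfies (3): for $g, h \in G$ with $g \ne h$, either $p(g) \ne p(h)$, in which case (3) follows from (3) for $(Q^\alpha_a)$, or $p(g) = p(h)$, in which case $gh^{-1}, hg^{-1} \in \ker(p)$ and (3) follows from (3) for $(P'_f)$ together with the observation that the cones of the spliced field agree with those of $(P'_f)$ on $\ker(p) \setminus \{id\}$.

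Finally I would argue that distinct orderings downstairs give distinct orderings upstairs, so the constructions produce uncountably many: if $(Q_a) \ne (Q'_a)$ as equivariant fields of cones on $A$, there exist $a \in A$ and $b \in A \setminus \{id\}$ with $b \in Q_a$, $b \notin Q'_a$; choosing lifts $\tilde a, \tilde b$ (with $p(\tilde b) = b \ne id$ so $\tilde b \notin \ker p$) shows the two pullbacks differ at $(\tilde a, \tilde b)$. Combined with Propositions \ref{atotalord} and \ref{uncountable partial}, which supply uncountably many distinct orderings on $A$, this gives uncountably many on $G$, completing both (i) and (ii).

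The main obstacle I anticipate is the correct design of the kernel-splicing and the bookkeeping in the equivariance verification: getting property (1) to hold forces a specific interaction between the $A$-level cones and the fixed background field $(P'_f)$ on $\ker(p)$, and one must check that this choice does not break equivariance (2) or, in part (ii), totality (3). Everything else --- the transfer of the ``not total'' property in (i), the existence of the background fields via the LIO hypothesis and Theorem \ref{liopt}, and the injectivity of the pullback --- should be routine once the construction is pinned down.
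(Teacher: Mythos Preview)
Your approach is correct and is essentially the same lexicographic idea as the paper's proof, just packaged differently. The paper works directly with the ordering relation: it fixes a locally invariant total ordering $<_H$ on $H = \ker(p)$, chooses a set $S$ of coset representatives containing $\id_G$, and defines $g_1 < g_2$ lexicographically (compare $p(g_1), p(g_2)$ via $\prec$ if they differ, otherwise compare $s^{-1}g_1, s^{-1}g_2$ via $<_H$ where $s \in S$ represents the common coset), following \cite{Chis:06}. Your version works instead at the level of equivariant fields of cones and replaces the pair ``ordering on $H$ plus coset representatives'' by a single background field $(P'_f)_{f\in G}$ on all of $G$, splicing it in on $\ker(p)$; this sidesteps the choice of representatives and makes the equivariance check (your Cases A--D) slightly more uniform. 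Both routes feed in Propositions~\ref{uncountable partial} and~\ref{atotalord} at exactly the same point, and both transfer non-totality (resp.\ totality) along $p$ by the same lifting argument you describe. The only substantive difference is bookkeeping, and your version is arguably a bit cleaner.
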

\begin{proof} (i) We have the following sequence,
$$\{id \} \longrightarrow H\stackrel{i}{\longrightarrow} G\stackrel{p}{\longrightarrow}A=G/H\longrightarrow \{ id \}$$ where $H=\mathrm{ker}(p)$ and $i$ is the inclusion map.

If $G$ does not admit a locally invariant partial ordering then $\mathrm{LIO}_p(G)$ is empty. Hence, we can assume that $G$ admits a locally invariant partial ordering, which implies that $\mathrm{LIO}_p(G)$ is not empty. Since $H$ is a subgroup of $G$, $\mathrm{LIO}_p(H)$ is also not empty. By Theorem \ref{liopt}, $\mathrm{LIO}_t(H)$ is also not empty. 
Let $<_H$ be a locally invariant total ordering on $H$, and $\prec$ be a locally invariant partial ordering on $A$. Let $S$ be a complete set of coset representatives of $H$ in $G$ which contains $id_G$.
We define the binary relation $<$ on $G$
as follows, following \cite[Lemma 2.1]{Chis:06}: 
\[
g_1 < g_2 \Leftrightarrow \begin{cases} g_1^{-1}g_2\notin H \ &\text{and } p(g_1)\prec p(g_2),\\
g_1^{-1}g_2\in H &\text{and } s^{-1}g_1<_H s^{-1}g_2 \textit{ where $s\in S$ is such that  $sH=g_1H=g_2H$}.
\end{cases}
\]

\begin{claim}
The binary relation $<$ is a strict partial ordering on $G$.
\end{claim}
\begin{proof} We include details that appear in \cite[Lemma 2.1]{Chis:06} for the sake of completeness.

(1) Irreflexivity and Asymmetry: Let $g_1, g_2\in G$ such that $g_1< g_2$ and $g_2< g_1$. If $g_1H\neq g_2H$, then $p(g_1)\prec p(g_2)$ and $p(g_2)\prec p(g_1)$, which is a contradiction. If $g_1H=g_2H$ then $s^{-1}g_1<_H s^{-1}g_2$ and $s^{-1}g_2<_H s^{-1}g_1$ where $s\in S$ is such that  $sH=g_1H=g_2H$, which also gives a contradiction.

(2) Transitivity: Let $g_1, g_2, g_3\in G$ such that $g_1< g_2$ and $g_2< g_3$. We have five cases:

{\bf Case 1}: If $g_1H\neq g_2H$, $g_2H\neq g_3H$ and $g_1H\neq g_3H$, then $p(g_1)\prec p(g_2)$ and $p(g_2)\prec p(g_3)$. Since $\prec$ is a strict partial order, $p(g_1)\prec p(g_3)$ and hence $g_1< g_3$. 

{\bf Case 2}: If $g_1H\neq g_2H$ and $g_2H=g_3H$, then $g_1H\neq g_2H=g_3H$. Hence, $p(g_1)\prec p(g_2)=p(g_3)$, so $g_1< g_3$. 

{\bf Case 3}: If $g_1H=g_2H$ and $g_2H=g_3H$, then $s^{-1}g_1<_H s^{-1}g_2$ and $s^{-1}g_2<_H s^{-1}g_3$ where $s\in S$ is such that $sH=g_1H=g_2H=g_3H$. Hence, $s^{-1}g_1<_H s^{-1}g_3$ which implies that $g_1< g_3$. 

{\bf Case 4}: If $g_1H=g_2H$ and $g_2H\neq g_3H$ then $g_1H=g_2H\neq g_3H$. Hence, $p(g_1)=p(g_2)\prec p(g_3)$, so $g_1< g_3$.

{\bf Case 5}: If $g_1H=g_3H$ and $g_2H\neq g_3H$ then $g_1H=g_3H\neq g_2H$. Hence, $p(g_3)=p(g_1)\prec p(g_2)$, so $g_3< g_2$ which is a contradiction.
This complete the proof of the Claim.
\end{proof}
\begin{claim}
The partial ordering $<$ on $G$ is a locally invariant partial ordering.
\end{claim}
\begin{proof}
Let $g_1, g_2\in G$ such that $g_1\neq id_G$. We have two cases:

{\bf Case 1}: If $g_1H\neq g_2H$, then either $p(g_2)\prec p(g_1)p(g_2)=p(g_1g_2)$ or $p(g_2)\prec p(g_1)^{-1}p(g_2)=p(g^{-1}_1g_2)$. Hence, either $g_2<g_1g_2$ or $g_2<g_1^{-1}g_2$.

{\bf Case 2}: Assume that $g_1H=g_2H$. We have two subcases:

{\bf Subcase 1}: If $g_1\in H$, then $g_2\in H$. Hence, either $g_2<_Hg_1g_2$ or $g_2<_Hg_1^{-1}g_2$, which implies that either $id_G^{-1}g_2<_Hid_G^{-1}g_1g_2$ or $id_G^{-1}g_2<_Hid_G^{-1}g_1^{-1}g_2$. Therefore, $g_2<g_1g_2$ or $g_2<g_1^{-1}g_2$.

{\bf Subcase 2}: If $g_1\notin H$, then $g_2\notin H$. We have that if $p(g_2)=p(g_1g_2)$ or $p(g_2)=p(g_1^{-1}g_2)$ then $p(g_2)=p(g_1)p(g_2)$ or $p(g_2)=p(g_1^{-1})p(g_2)$ which implies $H=p(g_1)$ or $H=p(g_1)^{-1}$, which also implies that $g_1\in H$ or $g_1^{-1}\in H$, a contradiction. Therefore, $p(g_2)\neq p(g_1g_2)$ and $p(g_2)\neq p(g_1^{-1}g_2)$. As $\prec$ is a locally invariant partial ordering, either $p(g_2)\prec p(g_1)p(g_2) = p(g_1g_2)$ or $p(g_2)\prec p(g_1)^{-1}p(g_2) = p(g_1^{-1}g_2)$. As $g_2^{-1}g_1g_2$ and $g_2^{-1}g_1^{-1}g_2$ do not lie in $H$, either $g_2<g_1g_2$ or $g_2<g_1^{-1}g_2$. This completes the proof of the claim \end{proof}

\begin{claim}
The space $\mathrm{LIO}_p(G) \setminus \mathrm{LIO}_t(G)$ is uncountable. 
\end{claim}
\begin{proof}

Fix a locally invariant total ordering $<_H$ on $H$. Let $\prec,\prec'$ be distinct locally invariant partial orderings on $A$ which are not total. Let $<$ and $<'$ be locally invariant partial orderings on $G$ constructed lexicographically as above by using $<_H$ and $\prec$ or $\prec'$, respectively. Since $\prec$ and $\prec'$ are distinct, there are $a,b\in A$ such that $a\prec b$ and either $b\prec' a$ or $a$ and $b$ are incomparable in $\prec'$.

Since $p$ is surjective, there exist $f, g\in G$ such that $p(f)=a\prec b=p(g)$, which implies that $f<g$. If $b\prec' a$, then we obtain $g<'f$ and so $<$ and $<'$ are distinct. If $a$ and $b$ are incomparable in $\prec'$, then $f$ and $g$ are incomparable in $<$. Therefore, the two locally invariant partial orderings $<$ and $<'$ are distinct.

 Since $\mathrm{LIO}_p(A) \setminus \mathrm{LIO}_t(A)$ is uncountable by Proposition \ref{uncountable partial}, it follows that $\mathrm{LIO}_p(G) \setminus \mathrm{LIO}_t(G)$ is also uncountable.
\end{proof}

(ii) A similar proof to the above, using Proposition \ref{atotalord} in place of Proposition \ref{uncountable partial}, shows that $\mathrm{LIO}_t(G)$ is either empty or uncountable.
\end{proof}

We may now conclude that the cardinality of $\mathrm{LIO}_t(G)$ and $\mathrm{LIO}_p(G) \setminus \mathrm{LIO}_t(G)$ is uncountable in the case when $\mathrm{LO}(G)$ is finite.

\begin{theorem}
\label{lo theorem2}
Suppose that $G$ is a Tararin group.  Then $\mathrm{LIO}_t(G)$ and $\mathrm{LIO}_p(G) \setminus \mathrm{LIO}_t(G)$ both contain uncountable families equivariant fields of cones arising from a lexicographic construction.
\end{theorem}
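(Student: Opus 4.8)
The plan is to reduce the statement to Proposition~\ref{abelian_quotient} by producing a suitable countable torsion-free abelian quotient of $G$, and then to check two nonemptiness facts so that the ``empty or uncountable'' alternatives of that proposition resolve as uncountable.

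First I would use the structure theorem for Tararin groups. Assuming $G$ is nontrivial (otherwise there is nothing to prove), $G$ carries its unique rational series $\{id\} = G_0 \triangleleft G_1 \triangleleft \cdots \triangleleft G_n = G$ with $n \geq 1$ and each $G_{i+1}/G_i$ rank one torsion-free abelian. The top quotient $A := G/G_{n-1} = G_n/G_{n-1}$ is then isomorphic to a nontrivial subgroup of $\mathbb{Q}$, and the canonical projection $p : G \twoheadrightarrow A$ is a surjection onto a countable torsion-free abelian group that is moreover a subgroup of $\mathbb{Q}$. Hence both parts of Proposition~\ref{abelian_quotient} apply with this $p$.

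Next I would observe that a Tararin group is left-orderable, so $\mathrm{LO}(G) \neq \emptyset$; pushing a positive cone through the embedding $i$ of Proposition~\ref{embedding} shows $\mathrm{LIO}_t(G) \neq \emptyset$, and pushing it through the map $\iota$ of Proposition~\ref{partial embedding} shows $\mathrm{LIO}_p(G) \setminus \mathrm{LIO}_t(G) \neq \emptyset$. With nonemptiness in hand, Proposition~\ref{abelian_quotient}(i) makes $\mathrm{LIO}_p(G) \setminus \mathrm{LIO}_t(G)$ uncountable, and Proposition~\ref{abelian_quotient}(ii), using $A \leq \mathbb{Q}$, makes $\mathrm{LIO}_t(G)$ uncountable. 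To match the wording of the theorem, I would recall that the uncountably many orderings exhibited in the proof of Proposition~\ref{abelian_quotient} are precisely those built by the lexicographic construction of \cite[Lemma~2.1]{Chis:06} --- a fixed locally invariant total ordering of $H = \ker p$ amalgamated with the uncountable families of locally invariant orderings of $A$ supplied by Propositions~\ref{atotalord} and~\ref{uncountable partial} --- so these families do indeed arise from a lexicographic construction. I expect no genuine obstacle here: the two substantive points are the (routine) identification of the top quotient of the Tararin series with a nontrivial subgroup of $\mathbb{Q}$, which is immediate from rank-one torsion-freeness, and the use of left-orderability of $G$ to supply the nonempty starting data that converts the dichotomies of Proposition~\ref{abelian_quotient} into uncountability.
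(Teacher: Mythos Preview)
Your proposal is correct and follows essentially the same route as the paper: identify the rank one torsion-free abelian top quotient $A \leq \mathbb{Q}$ from the Tararin series, invoke left-orderability of $G$ to rule out the empty alternative, and apply both parts of Proposition~\ref{abelian_quotient}. Your added remarks (explicitly checking nonemptiness via $i$ and $\iota$, and noting that the orderings in the proof of Proposition~\ref{abelian_quotient} are lexicographic) are sound elaborations of what the paper states in a single sentence.
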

\begin{proof}
Note that there exists a rank one torsion-free abelian group $A$ and a surjective homomorphism $p: G\longrightarrow A$. Since $G$ is left-orderable, by Proposition \ref{abelian_quotient} both  $\mathrm{LIO}_t(G)$ and $\mathrm{LIO}_p(G) \setminus \mathrm{LIO}_t(G)$ are uncountable owing to uncountable families of equivariant fields of cones arising from a lexicographic construction. \end{proof}

\section{The number of total and partial locally invariant orderings}
\label{main theorem}
First, we note that it is possible to mirror the ``standard method" used to prove that $\mathrm{LO}(G)$, the space of circular orderings $\mathrm{CO}(G)$ of a group, and the space of Conradian orderings are either uncountable or finite.  This method proceeds by choosing a minimal invariant set $M$ in either $\mathrm{LO}(G)$, $\mathrm{CO}(G)$ or the space of Conradian orderings under the $G$-action, and observing that this set is either uncountable or finite.  When $M$ is finite, this corresponds to an ordering whose orbit under the $G$-action is finite, the existence of which forces $G$ to have a specific structure.   An analysis of this particular case then allows one to reach the desired conclusion, see e.g. \cite{Linnell:09, Cla:12}.

 In our case, such a proof would proceed by choosing a minimal invariant set $M$ for the $(G \times G)$-action on $\mathrm{LIO}_t(G)$.  This set is either uncountable, in which case we are done, or it is finite.  In the finite case, by mirroring the arguments of \cite{LiWM:14}, one can prove that $G$ must be left-orderable and then apply the results of Theorem \ref{lo theorem} to conclude $\mathrm{LIO}_t(G)$ is uncountable.   However, this method is not well-adapted to showing that $\mathrm{LIO}_p(G) \setminus \mathrm{LIO}_t(G)$ is uncountable when $M$ is uncountable, and so to avoid this technical difficulty we can apply a different technique.  

The technique used in this proof was shown to us by Dave Morris, which shortened our original approach and is a modification of  \cite[Theorem 3, proof of (2) implies (3)]{LiWM:14}.   We first recall that a group $G$ admits a locally invariant ordering if and only if $G$ is \textit{diffuse} (\cite[Proposition 6.2]{LiWM:14}), meaning for every nonempty finite subset $S \subset G$, there exists an element $a \in S$ called an \textit{extreme point of $S$} such that for all $h \in G$ with $h \neq id$, either $ha \notin S$ or $h^{-1}a \notin S$.  We call a set $S \subset G$ \textit{symmetric} when $g \in S$ if and only if $g^{-1} \in S$.

\begin{theorem}
Suppose that $G$ admits a locally invariant ordering, and that $R \subset G$ satisfies $id \notin R$ and 
\[ |R \cap \{ g, g^{-1}\}| \leq 1
\]
for every $g \in G$.  Then:
\begin{enumerate} 
\item There is a locally invariant ordering $\prec$ of $G$ satisfying $g \prec g^{-1}$ if and only if $g \in R$;
\item the locally invariant ordering $\prec$ from (1) is total if and only if $|R \cap \{ g, g^{-1}\}| = 1$.
\end{enumerate}
\end{theorem}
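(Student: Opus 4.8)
The plan is to recast local invariance in terms of ``extreme points'' and then build the desired ordering by a greedy construction on finite subsets followed by a compactness argument. First I would record the reformulation: a strict partial order $\prec$ on $G$ is a locally invariant ordering exactly when, whenever $g,hg,h^{-1}g$ all lie in a finite set $U$ and $h\neq id$, at least one of $hg,h^{-1}g$ is $\prec$-above $g$; equivalently $g$ is never a maximal element of $\{g,hg,h^{-1}g\}$, and in the total case the $\prec$-maximum of every finite set is an extreme point of that set. Since $G$ admits a locally invariant ordering it is diffuse, so every nonempty finite subset has an extreme point and $G$ is torsion-free; in particular $g\neq g^{-1}$ for $g\neq id$, so the triples above are genuine triples and the pairs $\{g,g^{-1}\}$ are genuine pairs.

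The technical heart will be the following symmetry property of extreme points: if $T\subseteq G$ is symmetric and $a\in T$, then $a$ is an extreme point of $T$ if and only if $a^{-1}$ is. To see it, suppose $a^{-1}$ is not extreme, so $ha^{-1},h^{-1}a^{-1}\in T$ for some $h\neq id$; taking inverses and using symmetry of $T$ gives $a^{-1}h^{-1},a^{-1}h\in T$, and then $h':=a^{-1}h^{-1}a\neq id$ witnesses $h'a^{-1}=a^{-1}h^{-1}\in T$ and $(h')^{-1}a^{-1}=a^{-1}h\in T$, so $a$ is not extreme either. I expect this to be the one point where some cleverness is required; everything else is bookkeeping.

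Granting the lemma, I would proceed as follows. Fix a finite symmetric $S\subseteq G$ (allowing $id\in S$; note $id$ is an extreme point of a symmetric set only when that set is $\{id\}$, so it automatically ends up at the bottom). Build a partial order $\prec_S$ on $S$ by repeatedly peeling extreme points from the top of the remaining (always symmetric) set $T$: having chosen an extreme point $a$ of $T$, if exactly one of $a,a^{-1}$ lies in $R$, remove $a$ and $a^{-1}$ in two consecutive steps placing the one not in $R$ strictly above the one in $R$; if neither lies in $R$, remove both at once and put them at the same rank, incomparable. (The case $a,a^{-1}\in R$ is excluded by hypothesis; in the ``neither'' case the lemma guarantees that $a^{-1}$, like $a$, is extreme in $T$, which is what lets the verification succeed.) This yields $g\prec_S g^{-1}\iff g\in R$ for $g\in S$, makes every element an extreme point of the set of elements removed no earlier than itself, and hence satisfies the finite reformulated condition on $S$; and $\prec_S$ is a linear order on $S$ exactly when $|R\cap\{g,g^{-1}\}|=1$ for all $g\in S$. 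Now I would work inside the compact space of strict partial orders of $G$ (closed in $\wp(G\times G)$): for each finite symmetric $S$ the set $\mathcal{Q}_S$ of partial orders whose restriction to $S$ satisfies the finite reformulated condition and the pair condition $g\prec g^{-1}\iff g\in R$ ($g\in S$) is closed, is nonempty (extend $\prec_S$ to $G$ by keeping only the relations within $S$), and $\mathcal{Q}_{S\cup S'}\subseteq\mathcal{Q}_S\cap\mathcal{Q}_{S'}$. A point $\prec$ of $\bigcap_S\mathcal{Q}_S$ is a partial order; testing membership in $\mathcal{Q}_S$ for a finite symmetric $S$ containing $\{g,hg,h^{-1}g,id\}$ shows it is locally invariant, and for $S=\{g,g^{-1}\}$ that $g\prec g^{-1}\iff g\in R$. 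This gives (1). For (2): if $|R\cap\{g,g^{-1}\}|=1$ for all $g$, repeat with $\mathcal{Q}_S$ replaced by its intersection with $\{\prec:\prec|_S\ \text{total}\}$, which is still nonempty since $\prec_S$ is then linear, and obtain a total such ordering; conversely, if a locally invariant ordering with $g\prec g^{-1}\iff g\in R$ is total then each $g$ has one of $g,g^{-1}$ below the other, hence in $R$, forcing the equality.

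The main obstacle, as indicated above, is that the greedy choice of an extreme point $a$ of the current set $T$ is \emph{not free}: if $a\in R$ we need $a\prec a^{-1}$, i.e.\ $a^{-1}$ must be placed above $a$ and so must be peeled first, and it is not a priori clear that $a^{-1}$ is then available as an extreme point. The symmetry lemma resolves exactly this. The only further delicacy is that a pair $\{a,a^{-1}\}$ destined to be incomparable must be peeled in a single step, with both members extreme in the same set $T$, rather than one after the other, or else one of them loses the extremality property that the local-invariance verification relies on.
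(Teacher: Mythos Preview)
Your approach is essentially the paper's: reduce to finite symmetric subsets, peel off an extreme-point pair $\{a,a^{-1}\}$ from the top, induct, and finish by compactness. The paper simply asserts the symmetry fact (``since $S$ is symmetric, $a^{-1}$ is also an extreme point'') without proof, whereas you isolate it as the key lemma and supply an argument; your handling of the incomparable case and of the compactness step is also more explicit than the paper's.

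There is a small slip in your proof of the symmetry lemma. From $ha^{-1},h^{-1}a^{-1}\in T$ the inverses are $ah^{-1}$ and $ah$, not $a^{-1}h^{-1}$ and $a^{-1}h$; the correct witness that $a$ is not extreme is then $k=aha^{-1}$, since $ka=ah\in T$ and $k^{-1}a=ah^{-1}\in T$. (Equivalently, the computation you actually wrote down is a correct proof of the symmetric implication ``$a$ not extreme $\Rightarrow a^{-1}$ not extreme'' once you start from $ha,h^{-1}a\in T$; you have merely swapped the roles of $a$ and $a^{-1}$ in the first and last sentences.) With that cosmetic fix the lemma stands and the remainder of your argument goes through unchanged.
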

\begin{proof}
In order to prove part (1) of the theorem, it suffices to show that every finite, symmetric subset $S$ of $G$ admits a partial ordering $<$ satisfying:
\begin{enumerate}[(i)]
\item If $h \in G \setminus\{ \id\}$ and $g, gh, gh^{-1} \in S$, then either $g \prec hg$ or $g\prec h^{-1}g$;
\item for $g \in S$, $g \prec g^{-1}$ if and only if $g \in R$.
\end{enumerate}

From here, a standard compactness argument yields the required locally invariant ordering $<$ of $G$.  

To show the existence of such an ordering for every finite symmetric subset, we proceed by induction.  First note that if $|S| \leq 2$ then the required partial ordering of $S$ obviously exists.

Now given a symmetric $S$ with $|S| = k >2$, and suppose that a partial ordering $\prec$ satisfying (i) and (ii) exists for every finite symmetric set with fewer than $k$ elements.   Choose an extreme point $a \in S$.  Note that since $S$ is symmetric, $a^{-1}$ is also an extreme point of $S$ and $S' = S \setminus \{ a, a^{-1}\}$ is symmetric.  Let $\prec$ denote a partial ordering of $S'$ that satisfies (i) and (ii) above. Extend $\prec$ to an ordering of $S$ by declaring that $s\prec a$ and $s \prec a^{-1}$ for all $s \in S$ and further declaring $a \prec a^{-1}$ if $a \in R$ or $a^{-1} \prec a$ if $a^{-1} \in R$.  Note that since $G$ admits a locally invariant ordering, $G$ is torsion-free, and therefore $a, a^{-1}$ are distinct unless $a = id \notin R$.  Thus the prescription that $a \prec a^{-1}$ if $a \in R$ or $a^{-1} \prec a$ if $a^{-1} \in R$ is without issue.

We check that (i) and (ii) hold for the ordering $\prec$ on $S$.  Suppose that $h \in G \setminus \{ \id \}$ and that $\{ g, hg, h^{-1}g \} \subset S$.  If $\{g, hg, h^{-1}g \} \subset S'$ the result holds by our induction assumption, so suppose $\{g, hg, h^{-1}g \} \cap \{ a, a^{-1} \} \neq \emptyset$.  First note that $g \in \{a, a^{-1}\}$ is not possible since $a$ and $a^{-1}$ are extreme points of $S$.  On the other hand, if $hg \in \{a, a^{-1}\}$ or $h^{-1}g \in \{a, a^{-1}\}$ then either $g \prec hg$ or $g\prec h^{-1}g$ by definition of $\prec$.  Thus (i) holds, it is obvious that (ii) also holds, and this proves (1).

To prove (2), it suffices to note that we need only construct a \emph{total} ordering $\prec$ of every finite, symmetric subset $S$ of $G$, that satisfies (i) and (ii) above.  Note that the ordering $\prec$ constructed in the inductive step of the previous paragraph is total precisely when $|R \cap \{ a, a^{-1}\}| = 1$, so if $|R \cap \{ g, g^{-1}\}| = 1$ for all $g \in G$, then $\prec$ is always total.  This proves (2).
\end{proof}

\begin{proof}[Proof of Theorem \ref{maintheorem}]
Fix a group $G$ admitting a locally invariant partial ordering.  Then $G$ is infinite, and for each choice of $R \subset G$ satisfying $id \notin R$ and 
\[ |R \cap \{ g, g^{-1}\}| \leq 1
\]
for every $g \in G$, the equivariant fields of cones yielded by the previous theorem are distinct.  As there are uncountably many such $R \subset G$, the result follows.
\end{proof}

\end{document}